\newtheorem{thm}{Theorem}[section]
\newtheorem{lem}{Lemma}[section]
\newtheorem{cor}{Corollary}[section]
\newtheorem{prop}{Proposition}[section]
\newtheorem{defn}{Definition}[section]
\theoremstyle{definition}
\newtheorem{rem}{Remark}[section]
\theoremstyle{remark}
\newcommand{\D}{\nabla}
\newcommand{\haus}{\mathcal{H}^{n-1}}
\newcommand{\ds}{\displaystyle}
\newcommand{\norm}[1]{\left\Vert#1\right\Vert}
\newcommand{\abs}[1]{\left\vert#1\right\vert}
\newcommand{\R}{\mathbb{R}}
\newcommand{\Rn}{\mathbb R^n}
\newcommand{\W}{\mathcal W}
\newcommand{\de}{\partial}
\newcommand{\eps}{\varepsilon}
\newcommand{\utilp}{\tilde{u}_p}
\newcommand{\fhi}{\varphi}
\DeclareMathOperator{\dive}{div}
\DeclareMathOperator{\Qp}{\mathcal Q_{p}}
\def\@makefnmark{} 
\numberwithin{equation}{section}
\title{On the first Robin eigenvalue of a class of anisotropic operators} 
\author{
   Nunzia Gavitone%
\thanks{
Universit\`a degli studi di Napoli Federico II, Dipartimento di Matematica e Applicazioni ``R. Caccioppoli'', Via Cintia, Monte S. Angelo - 80126 Napoli, Italia. Email: nunzia.gavitone@unina.it}
{, }
Leonardo Trani%
\thanks{Universit\`a degli Studi di Napoli Federico II, Dipartimento di Matematica e Applicazioni ``R. Caccioppoli'', Via Cintia, Monte S. Angelo - 80126 Napoli, Italia. Email: leonardo.trani@unina.it}
}
\begin{document}
\maketitle

\begin{abstract}
The paper is devoted to the study of some properties of the first  eigenvalue of the anisotropic  $p$-Laplace operator with  Robin boundary condition involving a function $\beta$ which in general is not constant. In particular we obtain sharp lower bounds in terms of the measure of the domain and we prove a monotonicity property of the eigenvalue with respect the set inclusion. 
\vspace{.2cm}
\end{abstract}

\textsc{Keywords:} Eigenvalue problems, nonlinear elliptic equations, Faber-Krahn inequality, Wulff shape, Robin boundary condition\\

\textsc{Mathematics Subject Classifications (2010):} 35P15, 35P30, 35J60

\section{Introduction}

Let $F$ be a  norm in $\R^{n}$, that is  a convex, even, 1-homogeneous and non negative  function defined in $\R^{n}$. Moreover we will assume that  $F\in C^{2}(\mathbb R^{n}\setminus \{0\})$, and strongly convex that is for $1<p<+\infty$, it holds 
\begin{equation*}
\label{strong}
[F^{p}]_{\xi\xi}(\xi)\text{ is positive definite in } \R^{n}\setminus\{0\}.
\end{equation*}
 For $1<p<+\infty$ the so-called anisotropic $p$-Laplacian is defined as follows
\begin{equation*}
\label{qp}
\Qp u:= \dive \left(\frac{1}{p}\nabla_{\xi}[F^{p}](\nabla u)\right).
\end{equation*}
The assumptions on $F$ ensure that  the operator $\Qp$ is elliptic. The paper concerns the study of the following Robin eigenvalue problem for $\Qp$
\begin{equation}\label{intro:bvp.var.beta}
\begin{cases}
-\Qp v=\ell_{1}(\beta,\Omega) \vert v\vert^{p-2}v &\mbox{in}\ \Omega\\[.2cm]
F^{p-1}(\D v)F_\xi(\D v)\cdot\nu + \beta(x) F(\nu)\vert v\vert^{p-2}v=0 &\mbox{on}\ \de\Omega,
\end{cases}
\end{equation}
where $\Omega \subset \R^{n}\ \mbox{is a bounded open set with}\ C^{1,\alpha} \ \mbox{boundary, }\alpha \in ]0,1[,
$ $\nu$ is the Euclidean unit outer normal to $\de\Omega$ and  the function $\beta \colon \de \Omega \to [0,+\infty[$ belongs to $L^{1}(\de \Omega)$ and verifies
\begin{equation*}
\int_{\de \Omega}\beta(x) F(\nu)\,d\mathcal H^{n-1}=m >0.
\end{equation*}
Here $\ell_{1}(\beta,\Omega)$ is the first Robin eigenvalue of $\Qp$ and it has the following variational characterization  
\begin{equation}\label{intro:var.beta}
\ell_1(\beta, \Omega)=\inf_{\substack{v\in W^{1,p}(\Omega)\\ v\not\equiv 0}}\frac{\ds\int_{\Omega} F^p(\D v) dx+\ds\int_{\de\Omega}\beta(x)\vert v\vert^p F(\nu)d\haus}{\ds\int_{\Omega}\vert v\vert^p dx},
\end{equation}
and the minimizers of \eqref{intro:var.beta} are weak solutions to the  problem \eqref{intro:bvp.var.beta} (see section 3 for the precise definition).  
When $F(\xi)=|\xi|$ is the Euclidean norm, this  problem has been studied for instance in \cite{dan,bd10,df,jk,DpGK,gs}. In particular in \cite{bd10} and \cite{df} when $\beta(x)=\bar \beta$ is a nonnegative constant and for any $p$, $1<p<\infty$,  the authors prove a sharp lower bound for $\ell_{1}(\bar\beta, \Omega)$, keeping fixed the measure of the domain $\Omega$.
More precisely, they prove the following Faber-Krahn type inequality
\begin{equation}
\label{intro:fktesi}
 \ell_{1}(\bar\beta,\Omega)\ge \ell_{1}(\bar\beta,B_{R}),
\end{equation}
where $B_{R}$ is a ball having the same measure than $\Omega$. To prove this result they need mainly two key properties of $\ell_{1}(\bar \beta, \Omega),$ that is  a level set representation formula  and the decreasing  monotonicity of  $\ell_{1}(\bar \beta, \Omega)$  with respect to the radius when $\Omega$ is a ball that is
\begin{equation}
\label{gsb}
 \ell_{1}(\bar\beta, B_{r}) \le  \ell_{1}(\bar\beta, B_{s}), \quad r>s>0.
\end{equation}    
Despite to the Dirichlet eigenvalue, in general $\ell_{1}(\bar \beta, \Omega)$ is not monotone decreasing  with respect the set inclusion. For instance, in \cite{gs} when $p=2$ and $\beta=\bar \beta$, the authors prove a sort of  monotonicity property \eqref{gsb} for suitable convex domains which are not necessary balls   and they prove that   
\begin{equation}
\label{gs}
 \ell_{1}(\bar\beta, \Omega_{2}) \le  \ell_{1}(\bar\beta, \Omega_{1}),
\end{equation}
where $\Omega_{1}, \Omega_{2} \subset \R^{n}$ are bounded, Lipschitz and  convex domains such that $\Omega_{1} \subset B_r\subset\Omega_{2}$. 
Our aim is to prove \eqref{intro:fktesi} and \eqref{gs} in the anisotropic case  for any $p$, $1<p<\infty$ and for a suitable function $\beta$ which is  in general, not necessary constant. In particular, regarding  \eqref{intro:fktesi}, we will prove the following anisotropic  Faber-Krahn inequality
\begin{equation}
\label{intro:fktesia}
 \ell_{1}(\beta,\Omega)\ge \ell_{1}(\beta,\mathcal W_{R}),
\end{equation}
where $\mathcal W_{R}=\{F^{o}(\xi)<R\}$, with $F^{o}$ polar norm of $F$, such that $|\mathcal W_{R}|=|\Omega|$ and the function $\beta(x)=w(F^{o}(x))$ with $w$  non negative continuous function in $\R$ such that 
\begin{equation}
\label{ipbeta}
w(t) \ge C(R)\,t,
\end{equation} 
where $C$ is a suitable constant.
To do this we need to establish a representation formula for $\ell_{1}(\beta,\Omega)$, for not constant $\beta$.
As a consequence of this formula, we also obtain the following anisotropic weighted Cheeger inequality for $\ell_{1}(\beta,\Omega)$   
\begin{equation}
\label{intro:caw}
\ell_1(\beta, \Omega)\geq h_{\beta}(\Omega) - (p-1)\|\beta_{\Omega}^{p^\prime}\|_{L^{\infty}(\overline\Omega)}, 
\end{equation}
where $p'=\frac{p}{p-1}$, $\beta_{\Omega}$ is a function defined in the whole $\Omega$ having trace on $\de \Omega$ equals to $\beta$ and $ h_{\beta}(\Omega)$ is the anisotropic weighted Cheeger constant defined in section 6. This result was proved in the Euclidean case in \cite{jk} for $p=2$ and $\beta=\bar \beta$ constant.

Our paper has the following structure.

In section 2 we recall notation and preliminary results. In section 3 we prove some basic properties of $\ell_1(\beta,\Omega)$. In section 4 we prove some useful properties of the anisotropic radial problem. In section 5 we state and show the quoted monotonicity result  for $\ell_1(\beta,\Omega)$ and finally in section 6 we prove the representation formula for level set in the general case of variable coefficients $\beta$ proving as applications the quoted Faber-Krahn inequality \eqref{intro:fktesia} and the anisotropic weighted Cheeger inequality \eqref{intro:caw}.

\section{Notation and preliminaries}
\label{notation}
\subsection{Finsler norm}
Let $F$ be  a convex, even, 1-homogeneous and non negative  function defined in $\R^{n}$. 
Then $F$ is a convex function such that
\begin{equation}
\label{eq:omo}
F(t\xi)=|t|F(\xi), \quad t\in \R,\,\xi \in  \R^{n}, 
\end{equation}
 and such that
\begin{equation}
\label{eq:lin}
a|\xi| \le F(\xi),\quad \xi \in  \R^{n},
\end{equation}
for some constant $a>0$. The hypotheses on $F$ imply there exists $b\ge a$ such that
\begin{equation}
\label{upb}
F(\xi)\le b |\xi|,\quad \xi \in  \R^{n}.
\end{equation}
Moreover, throughout the paper we will assume that $F\in C^{2}(\mathbb R^{n}\setminus \{0\})$, and
\begin{equation}
\label{strong}
[F^{p}]_{\xi\xi}(\xi)\text{ is positive definite in } \R^{n}\setminus\{0\},
\end{equation}
with $1<p<+\infty$. 

The polar function $F^o\colon \R^n \rightarrow [0,+\infty[$ 
of $F$ is defined as
\begin{equation*}
F^o(v)=\sup_{\xi \ne 0} \frac{\langle \xi, v\rangle}{F(\xi)}. 
\end{equation*}
 It is easy to verify that also $F^o$ is a convex function
which satisfies properties \eqref{eq:omo} and
\eqref{eq:lin}. Furthermore, 
\begin{equation*}
F(v)=\sup_{\xi \ne 0} \frac{\langle \xi, v\rangle}{F^o(\xi)}.
\end{equation*}
The above property implies the following anisotropic version of the Cauchy Schwartz inequality
\begin{equation*}
\label{imp}
|\langle \xi, \eta\rangle| \le F(\xi) F^{o}(\eta), \qquad \forall \xi, \eta \in  \R^{n}.
\end{equation*}
The set
\[
\mathcal W = \{  \xi \in  \R^n \colon F^o(\xi)< 1 \}
\]
is the so-called Wulff shape centered at the origin. We put
$\kappa_n=|\mathcal W|$, where $|\mathcal W|$ denotes the Lebesgue measure
of $\mathcal W$. More generally, we denote by $\mathcal W_r(x_0)$
the set $r\mathcal W+x_0$, that is the Wulff shape centered at $x_0$
with measure $\kappa_nr^n$, and $\mathcal W_r(0)=\mathcal W_r$.


The following properties of $F$ and $F^o$ hold true:
\begin{gather*}
\label{prima}
 \langle F_\xi(\xi) , \xi \rangle= F(\xi), \quad  \langle F_\xi^{o} (\xi), \xi \rangle
= F^{o}(\xi),\qquad \forall \xi \in
 \R^n\setminus \{0\},
 \\
 \label{seconda} F(  F_\xi^o(\xi))=F^o(  F_\xi(\xi))=1,\quad \forall \xi \in
 \R^n\setminus \{0\}, 
\\
\label{terza} 
F^o(\xi)   F_\xi( F_\xi^o(\xi) ) = F(\xi) 
 F_\xi^o\left(  F_\xi(\xi) \right) = \xi\qquad \forall \xi \in
 \R^n\setminus \{0\}. 
\end{gather*}
\subsection{Anisotropic perimeter}
We recall the definition of anisotropic  perimeter for  a bounded, Lipschitz open set:
\begin{defn}\label{aniper}
Let $K$ be a bounded open subset of $\R^n$ with Lipschitz boundary. The anisotropic perimeter of $K$ is:
\[
P_F(K)=\displaystyle \int_{\de K}F(\nu) \, d \mathcal H^{n-1}
\]
where $\nu$ denotes the unit outer normal to $\de K$ and $\haus$ is the $(n-1)$-dimensional Hausforff measure. \end{defn}
Clearly, the perimeter of  $K$ is finite if and only if the usual Euclidean perimeter of $K$, $P_{\mathcal E}(K)$ is finite. Indeed, by the quoted properties of $F$ we obtain that
\[
aP_{\mathcal E}(K) \le P_F(K) \le bP_{\mathcal E}(K).
\] 
Furthermore, an isoperimetric inequality for the anisotropic perimeter holds (see for instance \cite{bu,aflt,fomu}). Namely let K be a bounded open subset of $\R^n$ with Lipschitz boundary, then
\begin{equation}\label{isop_ani}
P_F(K) \ge n \kappa_n^\frac{1}{n} \abs{K}^{1-\frac{1}{n}},
\end{equation}
where $\kappa_n$ is the Lebesgue measure of the unit Wulff shape. In particular, the equality  in \eqref{isop_ani} holds if and only if the set $K$ is homothetic to a Wulff shape.
We recall the following so-called weighted anisotropic isoperimetric inequality (see for instance  \cite{bro} and \cite{bf})
\begin{equation}
\label{wii}
\int_{\de \Omega} f(F^o(x))F(\nu)\, d \mathcal H^{n-1} \ge \int_{\de \mathcal W_R} f(F^o(x))F(\nu) \, d \mathcal H^{n-1}= f(R) P_F(\mathcal W_{R}),
\end{equation}
where $\mathcal W_R$ is a Wulff shape such that $|\Omega|=|\mathcal W_R|$ and $ f\colon [0,R] \to [0,+\infty[$ is a nondecreasing function such that 
\begin{equation*}
g(z)=f(z^{\frac 1 n}) z^{1-\frac{1}{n}} , \qquad  0 \le z \le R^{n}, 
\end{equation*}
is convex with respect to $z$.

If $\Omega \subset \R^{n}$ is a bounded open set, the anisotropic Cheeger constant of $\Omega$ is defined as follows
\begin{equation}
\label{cc}
h_{F}(\Omega)=\inf_{U\subset\Omega}\frac{P_{F}(U)}{\abs{U}}.
\end{equation}
In \cite{DpDbG} the authors prove that
\begin{equation}
\label{hr}
\frac{1}{R_{F}} \le h_{F}(\Omega)\le \frac{n}{R_{F}},
\end{equation}
where  $R_{F}$ is the anisotropic inradius that is the radius of the biggest Wulff shape contained in  $\Omega$.  

\subsection{Anisotropic $p$-Laplacian}
Let $\Omega\subset \R^{n}$ be a bounded open set and  $u \in W^{1,p}(\Omega)$. For $1<p<+\infty$ the anisotropic $p$ Laplacian is defined as follows
\begin{equation*}
\label{qp}
\Qp u:= \dive \left(\frac{1}{p}\nabla_{\xi}[F^{p}](\nabla u)\right).
\end{equation*}
The hypothesis \eqref{strong} on $F$ ensures that the operator 
is elliptic, hence there exists a positive constant $\gamma$ such that
\begin{equation*}
\frac1p\sum_{i,j=1}^{n}{\nabla^{2}_{\xi_{i}\xi_{j}}[F^{p}](\eta)
  \xi_i\xi_j}\ge
\gamma |\eta|^{p-2} |\xi|^2, 
\end{equation*}
for any $\eta \in \Rn\setminus\{0\}$ and for any $\xi\in \Rn$. 

For $p=2$, $\mathcal Q_{2}$ is the so-called Finsler Laplacian, and when $F(\xi)=|\xi|=\sqrt{\sum_{i=1}^{n}x_{i}^{2}}$ is the Euclidean norm, $\Qp$ reduces to the well known $p$-Laplace operator.  

Let $\Omega$ be a bounded open set in $\R^{n}$, $n\ge 2$, $1<p<+\infty$, and consider the following eigenvalue problem with Dirichlet boundary conditions related to $\Qp$
\begin{equation*}
\label{eigpb}
\left\{
\begin{array}{ll}
-\Qp u=\lambda |u|^{p-2}u & \text{in } \Omega \\
u=0 &\text{on } \partial\Omega.
\end{array}
\right.
\end{equation*}

The smallest eigenvalue, denoted  by $\lambda_{D}(\Omega)$, has the following well-known variational characterization:
\begin{equation*}
\label{rayleigh}
\lambda_{D}(\Omega)=\min_{\varphi\in W^{1,p}_{0}(\Omega)\setminus\{0\}} \frac{\ds\int_\Omega F^p(\nabla \varphi)\ dx}{\ds\int_\Omega |\varphi|^p\ dx}.
\end{equation*}
For the first eigenvalue of the anisotropic $p$-Laplacian with Dirichlet boundary conditions, the following isoperimetric inequality holds (see \cite{bfk}).
\begin{thm}\label{FKdir}
Let $\Omega\subset \R^{n}$, be a bonded domain with $n\ge 2$ then
\begin{equation*}
\abs{\Omega}^{\frac{p}{n}}\lambda_D(\Omega) \ge \kappa_n^{\frac{p}{n}}\lambda_D(\mathcal{W}).
\end{equation*}
Moreover, the equality holds if and only if $\Omega$ is homothetic to a Wulff shape.
\end{thm}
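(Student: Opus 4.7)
The plan is to adapt the classical Faber--Krahn argument via the convex symmetrization with respect to the Wulff shape, combined with the scaling of $\lambda_D$ under dilations. Let $u$ be a first Dirichlet eigenfunction of $\Qp$ on $\Omega$, which we may take to be nonnegative with $\|u\|_{L^p(\Omega)}=1$. Define $u^\star$ on the Wulff shape $\mathcal W_R$ with $|\mathcal W_R|=|\Omega|$ (so $R=(|\Omega|/\kappa_n)^{1/n}$) as the convex rearrangement of $u$: the function depending only on $F^o(x)$, nonincreasing in this variable, equimeasurable with $u$. Equimeasurability gives $\|u^\star\|_{L^p(\mathcal W_R)}=1$, so the denominator of the Rayleigh quotient is preserved.

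The heart of the proof is the Pólya--Szegő type inequality for the anisotropic Dirichlet energy,
\begin{equation*}
\int_{\mathcal W_R} F^p(\nabla u^\star)\, dx \;\le\; \int_\Omega F^p(\nabla u)\, dx.
\end{equation*}
I would establish it through the standard distribution-function machinery. Set $\mu(t)=|\{u>t\}|$. By the coarea formula, $-\mu'(t)=\int_{\{u=t\}}|\nabla u|^{-1}\, d\haus$, and the anisotropic energy decomposes via coarea as $\int_0^{\max u}\int_{\{u=t\}} F^p(\nabla u)/|\nabla u|\, d\haus\, dt$. An application of Hölder's inequality on the level set, combined with $P_F(\{u>t\})=\int_{\{u=t\}}F(\nu)\, d\haus$ and the anisotropic isoperimetric inequality \eqref{isop_ani}, yields a pointwise lower bound in terms of $\mu(t)$ and $-\mu'(t)$. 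The same computation carried out for $u^\star$ produces equality, because the level sets of $u^\star$ are Wulff shapes and therefore saturate \eqref{isop_ani}. Integrating in $t$ gives the desired inequality.

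Once this is in hand, plugging $u^\star$ as a test function into the Rayleigh quotient on $\mathcal W_R$ gives $\lambda_D(\mathcal W_R)\le \lambda_D(\Omega)$. The elementary scaling $\lambda_D(\mathcal W_R)=R^{-p}\lambda_D(\mathcal W)$, obtained by substituting $x\mapsto Rx$ in the quotient and using the $p$-homogeneity of $F^p(\nabla\cdot)$, then rearranges exactly to
\begin{equation*}
|\Omega|^{p/n}\lambda_D(\Omega)\;\ge\;\kappa_n^{p/n}\lambda_D(\mathcal W).
\end{equation*}

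The main obstacle is the Pólya--Szegő step: one must be careful that the Hölder inequality applied on $\{u=t\}$ pairs $F^p(\nabla u)/|\nabla u|$ against $F(\nu)|\nabla u|^{-1}$ correctly, so that the anisotropic perimeter actually appears and \eqref{isop_ani} can be invoked. For the equality case, the inequality on each level set must be an equality for a.e.\ $t$, which in \eqref{isop_ani} forces each super-level set $\{u>t\}$ to be homothetic to $\mathcal W$; letting $t\to 0$ (using that $u>0$ in $\Omega$ by strong maximum principle for $\Qp$) forces $\Omega$ itself to be homothetic to a Wulff shape.
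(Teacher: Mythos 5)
Your proposal is correct in approach, but note that the paper itself gives no proof of this theorem: it is recalled from Belloni--Ferone--Kawohl \cite{bfk}, and your convex-symmetrization argument (anisotropic P\'olya--Szeg\H{o} via coarea, H\"older on the level sets, the isoperimetric inequality \eqref{isop_ani}, then the scaling $\lambda_D(\mathcal W_R)=R^{-p}\lambda_D(\mathcal W)$) is precisely the proof given there, resting on the symmetrization results of \cite{aflt}. The only points to tighten are that in general $-\mu'(t)\ge\int_{\{u=t\}}|\nabla u|^{-1}\,d\haus$ rather than equality (the inequality still goes the right way, since this quantity sits in the denominator of your level-set bound), and that the equality case requires Brothers--Ziemer-type rigidity (ruling out flat spots and a null set where $\nabla u=0$ on intermediate levels) as handled in \cite{bfk}; both are standard refinements of exactly the argument you outline.
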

Finally we recall that for a given bounded open set in $\R^{n}$, the anisotropic Cheeger inequality states that (see for instance \cite{cas,DpDbG,kn})
\begin{equation}
\label{cheeger}
\lambda_{D}(\Omega)\ge \left(\frac{h_{F}(\Omega)}{p}\right)^{p}, \quad 1 <p<\infty.
\end{equation}
%
%
%
\section{The first Robin eigenvalue of $\Qp$}
In this section we will investigate some properties of the first Robin eigenvalue related to $\Qp$, $1<p <\infty$.
From now on we assume that  
\begin{equation}\label{dom_cond}
\Omega \subset \R^{n}\ \mbox{is a bounded open set with}\ C^{1,\alpha} \ \mbox{boundary and } \alpha \in ]0,1[.
\end{equation}
Let us consider the following Robin eigenvalue problem for $\Qp$
\begin{equation}\label{bvp.var.beta1}
\begin{cases}
-\Qp u=\ell \vert u\vert^{p-2}u &\mbox{in}\ \Omega\\[.2cm]
F^{p-1}(\D u)F_\xi(\D u)\cdot\nu + \beta(x) F(\nu)\vert u\vert^{p-2}u=0 &\mbox{on}\ \de\Omega,
\end{cases}
\end{equation}
where $u\in W^{1,p}(\Omega)$, $\nu$ is the Euclidean unit outer normal to $\de\Omega$ and  the function $\beta \colon \de \Omega \to [0,+\infty[$ belongs to $L^{1}(\de \Omega)$ and verifies
\begin{equation}
\label{m}
\int_{\de \Omega}\beta(x) F(\nu)\,d\mathcal H^{n-1}=m >0.
\end{equation}
From now on we will write $\bar \beta$ instead of $\beta$ when $\beta$ is a positive constant.
\begin{defn}
\label{aut}
A function $u\in W^{1,p}(\Omega)$, $u \not\equiv 0$ is an eigenfunction  to \eqref{bvp.var.beta1} if  $\beta(\cdot)\vert u\vert^p\in L^1(\de\Omega)$ and
\begin{equation}\label{weak.problem}
\ds\int_{\Omega}F^{p-1}(\D v)F_{\xi}(\D v)\cdot \D\varphi dx + \ds\int_{\de\Omega}\beta(x)\vert u\vert^{p-2} u\varphi F(\nu) d\haus = \ell\ds\int_{\Omega}\vert u\vert^{p-2}u\varphi dx
\end{equation}
for any test function $\varphi\in W^{1,p}(\Omega)\cap L^{\infty}(\de\Omega)$. The corresponding number $\ell$, is called Robin eigenvalue.
\end{defn}
The smallest eigenvalue of \eqref{bvp.var.beta1}, $\ell_{1}(\beta,\Omega)$ has the following variational characterization
\begin{equation}\label{var.beta}
\ell_1(\beta, \Omega)=\inf_{\substack{v\in W^{1,p}(\Omega)\\ v\not\equiv 0}}J[\beta,v]:=\inf_{\substack{v\in W^{1,p}(\Omega)\\ v\not\equiv 0}}\frac{\ds\int_{\Omega} F^p(\D v) dx+\ds\int_{\de\Omega}\beta(x)\vert v\vert^p F(\nu)d\haus}{\ds\int_{\Omega}\vert v\vert^p dx}. 
\end{equation}
By definition we have 
\[
\ell_1(\beta, \Omega) \le \lambda_{D}(\Omega),
\]
where $\lambda_{D}(\Omega)$ is the first Dirichlet eigenvalue of $\Qp$.
Indeed choosing as test function in \eqref{var.beta}, 
 the first Dirichlet eigenfunction $u_D $ of $\lambda_{D}(\Omega)$ in the Reileigh quotient, we get \begin{multline*}
\ell_1( \beta, \Omega) = \min_{\substack{v\in W^{1,p}(\Omega)\\ v\neq 0}}
\frac{\ds\int_{\Omega} [F(\D v)]^{p}dx +\int_{\de\Omega}\beta|v|^{p}F(\nu)d\haus}{\ds\int_{\Omega}|v|^{p}dx}\\[.15cm]
\leq \frac{\ds\int_{\Omega} [F(\D u_D)]^{p}dx +\int_{\de\Omega}\beta|u_D|^{p}F(\nu)d\haus}{\ds\int_{\Omega}|u_D|^{p}dx}= \frac{\ds\int_{\Omega} [F(\D u_D)]^{p}dx}{\ds\int_{\Omega} \abs{u_D}^{p}dx}
=\lambda_D(\Omega).
\end{multline*}

The following  existence result  holds.

\begin{prop}\label{eig_betavar}
Let $\beta\in L^1(\de\Omega)$, $\beta\geq 0$ be such that \eqref{m} holds. Then there exists a positive minimizer $u \in C^{1,\alpha}(\Omega) \cap L^{\infty}(\Omega)$ of \eqref{var.beta} which is a weak solution to \eqref{bvp.var.beta1} in $\Omega$ with $\ell=\ell_{1}(\beta, \Omega)$. Moreover $\ell_1(\beta, \Omega)$ is positive and it is simple, that is the relative eigenfunction $u$ is unique up to a multiplicative constant. 
\end{prop}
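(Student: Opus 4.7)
The plan is to apply the direct method of the calculus of variations to the Rayleigh quotient \eqref{var.beta}, to extract a positive minimizer via the strong maximum principle, and finally to establish simplicity with a Picone-type argument.

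First I would fix a minimizing sequence $\{v_k\}\subset W^{1,p}(\Omega)$ normalized by $\|v_k\|_{L^p(\Omega)}=1$, so that $J[\beta,v_k]\to\ell_1(\beta,\Omega)$. Since $\beta\ge 0$, the boundary term in $J$ is non-negative, and by \eqref{eq:lin} one has $a^p\int_{\Omega}|\D v_k|^{p}\,dx \le \int_{\Omega}F^p(\D v_k)\,dx \le J[\beta,v_k]$, so $\{v_k\}$ is bounded in $W^{1,p}(\Omega)$. The $C^{1,\alpha}$-regularity of $\de\Omega$ gives reflexivity of $W^{1,p}(\Omega)$, Rellich compactness in $L^p(\Omega)$ and compactness of the trace into $L^p(\de\Omega)$; extracting a subsequence, $v_k\rightharpoonup u$ weakly in $W^{1,p}(\Omega)$, $v_k\to u$ strongly in $L^p(\Omega)$ (so $\|u\|_{L^p(\Omega)}=1$, hence $u\not\equiv 0$), and $v_k\to u$ pointwise a.e.\ on $\de\Omega$.

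Next I would verify lower semicontinuity of the two terms in $J$: the gradient part is weakly lower semicontinuous by the convexity of $F^p$ (hypothesis \eqref{strong}), and for the boundary part I would apply Fatou's lemma to the non-negative integrands $\beta(x)|v_k|^p F(\nu)$, obtaining
\[
\int_{\de\Omega}\beta(x)|u|^p F(\nu)\,d\haus \le \liminf_k \int_{\de\Omega}\beta(x)|v_k|^p F(\nu)\,d\haus.
\]
Combined with $\|u\|_{L^p(\Omega)}=1$, this shows $u$ is a minimizer, and computing the first variation yields the weak formulation \eqref{weak.problem} with $\ell=\ell_1(\beta,\Omega)$. Since $J[\beta,|u|]=J[\beta,u]$, I may assume $u\ge 0$; the strong maximum principle for $\Qp$ then forces $u>0$ in $\Omega$. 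Strict positivity of $\ell_1(\beta,\Omega)$ follows from the observation that $\ell_1=0$ would imply $F(\D u)=0$ a.e., so $u\equiv c>0$, and the boundary term would equal $c^p\,m>0$ by \eqref{m}, a contradiction.

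Local $C^{1,\alpha}$ regularity would then follow from the Tolksdorf--Lieberman theory for quasilinear operators, and the $L^\infty$ bound from a Moser iteration on test functions of the form $u^{k}$, where the boundary contribution is dropped at each step using $\beta\ge 0$. For simplicity, I would invoke an anisotropic Picone-type inequality: given two positive eigenfunctions associated to $\ell_1(\beta,\Omega)$, testing the equation for one against a ratio constructed from the other and exploiting equality in the Picone identity forces the two to be proportional. I expect the most delicate point to be the lower semicontinuity of the boundary term when $\beta$ is merely in $L^1(\de\Omega)$: dominated convergence is unavailable, so one genuinely has to rely on the Fatou step, and this is exactly where the sign assumption $\beta\ge 0$ becomes essential.
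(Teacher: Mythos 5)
Your proposal is correct, and for existence, positivity of $\ell_1(\beta,\Omega)$, the $L^\infty$ bound and the $C^{1,\alpha}$ regularity it follows essentially the same route as the paper: a normalized minimizing sequence, compact trace embedding, weak lower semicontinuity of the gradient term plus Fatou's lemma for the boundary term (the paper makes exactly this Fatou step, and your closing observation that this is the only viable tool when $\beta$ is merely in $L^1(\de\Omega)$ matches its reasoning), the same contradiction argument for $\ell_1(\beta,\Omega)>0$ via constancy of the eigenfunction and \eqref{m}, and the same regularity machinery in spirit (the paper cites \cite{pota} for the $L^\infty$ estimate, where a Moser-type iteration dropping the nonnegative boundary term is indeed what happens, and \cite{db83,tk84} for $C^{1,\alpha}$). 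The one genuine divergence is simplicity: you propose a Picone-type argument, testing the equation for one positive eigenfunction against a ratio built from the other and exploiting the equality case of the anisotropic Picone inequality, whereas the paper follows \cite{pota} and uses the hidden-convexity path $\eta_t=(tu^p+(1-t)w^p)^{1/p}$, showing via the convexity estimate \eqref{conv} that $\eta_t$ is again a minimizer and that equality in \eqref{conv} forces $u=w$. Both routes work: your Picone argument is essentially the mechanism the paper itself deploys in Proposition \ref{carac_eig_betavar} (test function $u^p/(v+\eps)^{p-1}$), and it has the advantage of simultaneously showing that every positive eigenfunction is a first eigenfunction, at the price of needing $u\in L^\infty(\Omega)$ and a careful passage $\eps\to 0$ to legitimize the singular quotient; the paper's convexity argument works directly at the level of the functional $J$ and never tests with singular ratios. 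One small ordering caveat in your write-up, a rearrangement rather than a gap: the strong maximum principle (Harnack inequality) for $\Qp$ requires local boundedness and regularity of $u$, so the conclusion $u>0$ in $\Omega$ should logically come after, not before, the $L^\infty$ and $C^{1,\alpha}$ steps.
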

\begin{proof}
Let $u_k\in W^{1,p}(\Omega)$ be a minimizing sequence of \eqref{var.beta} such that $\Vert u_k\Vert_{L^p(\Omega)}=1$. Then, being $u_k$ bounded in $W^{1,p}(\Omega)$   there exists a subsequence, still denoted by $u_k$ and a function $u\in W^{1,p}(\Omega)$ with $\Vert u\Vert_{L^p(\Omega)}=1$, such that $u_k\rightarrow u$ strongly in $L^p(\Omega)$ and $\D u_k\rightharpoonup \D u$ weakly in $L^p(\Omega)$. Then  $u_k$  converges to $u$ in $L^p(\de\Omega)$ and then almost everywhere on $\de \Omega$ to $u$. Then by the weak lower semicontinuity and Fatou's lemma we get 
\begin{equation*}
\ell_1(\beta, \Omega)=\lim_{k\rightarrow +\infty}J[\beta,u_k]\geq J[\beta,  u],
\end{equation*}
then $\beta(\cdot)\vert u\vert^p\in L^1(\de\Omega)$ and $u$ is an eigenfunction related to $\ell_{1}(\beta, \Omega)$ by definition.    Moreover $u \in L^{\infty}(\Omega)$. To see that, we  can argue exactly as in \cite{pota} in order to get that $u \in L^{\infty}(\Omega)$.

Now the $L^{\infty}$-estimate, the hypothesis \eqref{strong} and the properties of $F$ allow to apply standard regularity results (see \cite{db83}, \cite{tk84}), in order to obtain that $u\in C^{1,\alpha}(\Omega)$. 
%

In order to prove that $\ell_{1}(\beta, \Omega) >0 $, we procede  by contraddiction supposing that there exists $\beta_{o}$ which verifies \eqref{m} and  such that $\ell_1(\beta_{o}, \Omega)=0$. Then there exists $u_{\beta_{o}}\in C^{1,\alpha}(\Omega)\cap L^{\infty}(\overline{\Omega})$ such that $u_{\beta_{o}} \ge 0$,  $\norm{u_{\beta_{o}}}_{L^{p}(\Omega)}=1$ and
\begin{equation*}
0=\ell_1(\beta_{o},\Omega)=\ds\int_{\Omega}F^p(\D u_{\beta_{o}})dx+\ds\int_{\de\Omega}\beta_{o} \,u_{\beta_{o}}^p\,F(\nu)d\haus.
\end{equation*}
Then $u_{\beta_{o}}$ has to be constant in $\overline{\Omega}$ and then  $u_{\beta_{o}}^p\ds\int_{\de\Omega}\beta_{o} F(\nu)=u_{\beta_{o}}^p m= 0$. Being $m >0$, then  $u_{\beta_{o}}=0 \mbox{ in }\ \overline{\Omega}$, and this is not true. Hence $\ell_1( \beta_{o},\Omega)>0$.\par 
Finally to prove  the semplicity of the eigenfunctions we can procede exactly as in \cite{pota}. For completeness  we recall the main steps. 
Let $u,w$ be  positive minimizers of  the functional $J$ defined in \ref{var.beta} such that $\|u\|_{p}=\|w\|_{p}=1$, and let us consider  the function $\eta_{t}=(t u^{p}+(1-t)w^{p})^{1/p}$, with $t\in[0,1]$.  Obviously, $\|\eta_{t}\|_{p}=1$. Clearly it holds:
\begin{equation}
\label{absurd}
J[\beta,u]=\ell_1( \beta,\Omega) =J[\beta,w]. 
\end{equation}
In order to compute $J[\beta,\eta_{t}]$ we observe that 
by using the homogeneity and the convexity of $F$ it is not hard to prove that (see for instance \cite{pota} for the precise computation)
\begin{equation}
\label{conv}
F^{p}(\D\eta_{t})  \le t F^{p}(\D v)+(1-t)F^{p}(\D w).
\end{equation}
%
Hence recalling \eqref{absurd}, we obtain 
\[
J[\beta,\eta_{t}] \le t J[\beta,u]+ (1-t)J[\beta,w] =\ell_{1}(\beta,\Omega),
\]
and then $\eta_{t}$ is a minimizer for $J$. This implies that the  equality holds in \eqref{conv}, and as showed in \cite{pota}, this implies that $u=w  $ that is the uniqueness.
\end{proof}
The following result characterizes the first eigenfunctions. 
\begin{prop}\label{carac_eig_betavar}
Let $\beta\in L^1(\de\Omega)$, $\beta\geq 0$ be such that \eqref{m} holds. Let  $\eta >0$ and  $v\in W^{1,p}(\Omega)$, $v \not \equiv 0$ and   $v \ge0$ in $\Omega$ such that
\begin{equation*}\label{prob.gen}
\begin{cases}
-\Qp v=\eta v^{p-1} &\mbox{in}\ \Omega\\[.2cm]
F^{p-1}(\D v)F_\xi(Dv)\cdot\nu + \beta F(\nu) v^{p-1}=0 &\mbox{on}\ \de\Omega
\end{cases}
\end{equation*}
in the sense of Definition \ref{aut}. Then $v$ is a first eigenfunction of \eqref{bvp.var.beta}, and $\eta=\ell_1(\beta,\Omega)$.
\begin{proof}
Let $u \in W^{1,p}(\Omega)$ be a positive  eigenfunction related to $\ell_{1}(\beta, \Omega)$. 
Choosing $u^{p}/(v+\eps)^{p-1}$, with $\eps>0$, as test function in the Definition \ref{aut} for the solution $v$, and arguing exactly as in \cite{pota}, we get the  claim.
\end{proof}
\begin{rem}
We observe that Propositions \ref{eig_betavar} and \ref{carac_eig_betavar} generalize the results proved respectively in \cite{DpGK} for the Euclidean norm and in \cite{pota} when $\beta(x)=\beta$ is a positive constant.   
\end{rem}

\end{prop}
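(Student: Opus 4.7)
The plan is to adapt the classical Picone-type argument used for Dirichlet problems to this anisotropic Robin setting, along the lines suggested in the proof sketch. Let $u\in W^{1,p}(\Omega)\cap L^{\infty}(\overline\Omega)$ be the positive first eigenfunction associated to $\ell_{1}(\beta,\Omega)$ produced by Proposition \ref{eig_betavar}, and let $(v,\eta)$ be as in the hypothesis. The strategy is to squeeze $\eta$ between $\ell_{1}(\beta,\Omega)$ from above (via Picone) and from below (via the variational principle).

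First I would insert the test function $\varphi_{\eps}=u^{p}/(v+\eps)^{p-1}$, $\eps>0$, into the weak formulation \eqref{weak.problem} for $v$. Since $v\ge 0$ and $u,v\in L^{\infty}$, $\varphi_{\eps}\in W^{1,p}(\Omega)\cap L^{\infty}(\de\Omega)$, so it is admissible. This yields
\begin{equation*}
\int_{\Omega} F^{p-1}(\D v)F_{\xi}(\D v)\cdot\D\varphi_{\eps}\,dx + \int_{\de\Omega}\beta\,\frac{v^{p-1}u^{p}}{(v+\eps)^{p-1}}F(\nu)\,d\haus = \eta\int_{\Omega}\frac{v^{p-1}u^{p}}{(v+\eps)^{p-1}}\,dx.
\end{equation*}
Next I would invoke the anisotropic Picone inequality
\begin{equation*}
F^{p-1}(\D v)F_{\xi}(\D v)\cdot\D\!\lto\frac{u^{p}}{(v+\eps)^{p-1}}\rto \le F^{p}(\D u),
\end{equation*}
which follows pointwise from the convexity of $F^{p}$ together with the Euler identity $\langle F_{\xi}(\xi),\xi\rangle=F(\xi)$ recorded in Section \ref{notation}, exactly as in \cite{pota}. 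Combining with the identity above gives
\begin{equation*}
\eta\int_{\Omega}\frac{v^{p-1}u^{p}}{(v+\eps)^{p-1}}\,dx \le \int_{\Omega}F^{p}(\D u)\,dx + \int_{\de\Omega}\beta\,\frac{v^{p-1}u^{p}}{(v+\eps)^{p-1}}F(\nu)\,d\haus.
\end{equation*}

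Sending $\eps\to 0^{+}$ is the next step: by the strong maximum principle for $\Qp$ (granted by the ellipticity \eqref{strong}), $v>0$ in $\Omega$, so $v^{p-1}/(v+\eps)^{p-1}\to 1$ a.e.\ in $\Omega$, while the boundary ratio is bounded by $1$. Dominated convergence then yields
\begin{equation*}
\eta\int_{\Omega}u^{p}\,dx \le \int_{\Omega}F^{p}(\D u)\,dx + \int_{\de\Omega}\beta\,u^{p}F(\nu)\,d\haus = \ell_{1}(\beta,\Omega)\int_{\Omega}u^{p}\,dx,
\end{equation*}
so $\eta\le\ell_{1}(\beta,\Omega)$. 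For the reverse inequality I would test the equation for $v$ with $v$ itself, obtaining $\eta=J[\beta,v]\ge\ell_{1}(\beta,\Omega)$ by the variational characterization \eqref{var.beta}. Therefore $\eta=\ell_{1}(\beta,\Omega)$ and $v$ realizes the infimum in \eqref{var.beta}; the simplicity statement in Proposition \ref{eig_betavar} then forces $v$ to be proportional to $u$, hence a first eigenfunction.

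The main obstacle is the anisotropic Picone inequality itself: verifying the pointwise bound from the convexity of $F^{p}$ and the homogeneity identities requires a careful computation that does not follow formally from the Euclidean case. A secondary technical point is ensuring integrability as $\eps\to 0$ on the boundary, where $\beta$ is only $L^{1}$ and $v$ may vanish; this is handled by the pointwise majorant $\beta u^{p}$ which is integrable since $u\in L^{\infty}(\de\Omega)$.
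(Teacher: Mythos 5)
Your proposal is correct and follows essentially the same route as the paper: the paper's proof consists precisely of testing the weak formulation for $v$ with $u^{p}/(v+\eps)^{p-1}$ and then arguing ``exactly as in \cite{pota}'', which is the Picone-plus-limit argument you spell out (including the positivity of $v$ via the Harnack/maximum principle and the reverse inequality from the Rayleigh quotient). The only cosmetic remarks: $v\in L^{\infty}$ is not needed (only $u\in L^{\infty}$ and $(v+\eps)^{p-1}\ge\eps^{p-1}$ make $\varphi_{\eps}$ admissible), and testing with $v$ itself should formally go through truncations $\min(v,k)$ since Definition \ref{aut} requires test functions bounded on $\de\Omega$.
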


\begin{thm}
\label{th:lbdpr2}
Let $\beta\in L^1(\de\Omega)$, $\beta\geq 0$ and such that \eqref{m} holds. The following properties hold for $\ell_1(\beta,\Omega)$
 \begin{enumerate}
	\item[(i)] $\forall t>0,\ \ell_1(\beta(\frac{x}{t}), t\Omega)=t^{-p}\ell_1(t^{p-1}\beta(y), \Omega), \quad x \in \de(t \Omega), y\in \de\Omega$ ;
	\item[(ii)]  $\ell_1(\beta, \Omega)\leq \frac{m}{\vert\Omega\vert}$;
	\item[(iii)] $a^{p}\ell_{\mathcal E}(a^{1-p}\beta, \Omega)\le\ell_1(\beta, \Omega)\leq b^{p}\ell_{\mathcal E}(b^{1-p}\beta, \Omega)$,
	
	where $a,b$ are defined in \eqref{eq:lin},\eqref{upb} and $\ell_{\mathcal E}(a^{1-p}\beta, \Omega)$, $\ell_{\mathcal E}(b^{1-p}\beta, \Omega)$ are the first Robin eigenvalue for the Euclidean $p$-Laplacian corresponding respectively to the function $a^{1-p}\beta$ and $b^{1-p}\beta$;
	\item[(iv)]  If $\beta(x)\ge \bar\beta>0,\ \mbox{for almost }\ x\in\de\Omega$, then 
	$$\sup_{\vert\Omega\vert =k}\ell_1(\beta , \Omega)=+\infty$$
 \end{enumerate}
\end{thm}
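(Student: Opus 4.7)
The plan is to derive all four items from the Rayleigh quotient \eqref{var.beta}. For (i), I apply the dilation $x=ty$: given $w\in W^{1,p}(t\Omega)$, define $v(y)=w(ty)$, so $\nabla w(ty)=t^{-1}\nabla v(y)$, $dx=t^{n}dy$, and $d\haus(x)=t^{n-1}d\haus(y)$ (the Euclidean outer unit normal is scale-invariant). Substituting into $J[\beta(\cdot/t),w]$ and factoring $t^{-p}$ produces
\[
J[\beta(\cdot/t),w]
=t^{-p}\,\frac{\ds\int_{\Omega}F^{p}(\D v)\,dy+t^{p-1}\int_{\de\Omega}\beta|v|^{p}F(\nu)\,d\haus}{\ds\int_{\Omega}|v|^{p}\,dy}
=t^{-p}J[t^{p-1}\beta,v],
\]
and passing to infima gives (i). For (ii), insert $v\equiv1$: the gradient term vanishes, the boundary term equals $m$ by \eqref{m}, the denominator is $|\Omega|$, so $\ell_{1}(\beta,\Omega)\le m/|\Omega|$.

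For (iii), use the pointwise bounds $a|\xi|\le F(\xi)\le b|\xi|$ from \eqref{eq:lin}--\eqref{upb}, together with $a\le F(\nu)\le b$ since $|\nu|=1$. The Rayleigh quotient is sandwiched as
\[
a^{p}\,\frac{\ds\int_{\Omega}|\D v|^{p}+a^{1-p}\int_{\de\Omega}\beta|v|^{p}\,d\haus}{\ds\int_{\Omega}|v|^{p}}
\;\le\;J[\beta,v]\;\le\;
b^{p}\,\frac{\ds\int_{\Omega}|\D v|^{p}+b^{1-p}\int_{\de\Omega}\beta|v|^{p}\,d\haus}{\ds\int_{\Omega}|v|^{p}},
\]
and identifying the inner quotients as Euclidean Robin Rayleigh quotients with weights $a^{1-p}\beta$ and $b^{1-p}\beta$, then passing to infima, yields (iii).

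The substantial item is (iv). Using (iii) together with the obvious monotonicity of $\ell_{\mathcal E}(\cdot,\Omega)$ in its first argument (clear from the variational formula, since increasing $\beta$ only increases the numerator), we have $\ell_{1}(\beta,\Omega)\ge a^{p}\ell_{\mathcal E}(a^{1-p}\bar\beta,\Omega)$, so it suffices to produce, for any fixed $\gamma>0$, a sequence of smooth domains of volume $k$ along which the Euclidean Robin eigenvalue $\ell_{\mathcal E}(\gamma,\cdot)$ diverges. I would take the smooth flattened ellipsoid
\[
\Omega_{\eps}=\lgr x\in\Rn : \tfrac{x_{1}^{2}}{\eps^{2}}+\tfrac{x_{2}^{2}+\cdots+x_{n}^{2}}{L_{\eps}^{2}}<1\rgr,\qquad |\Omega_{\eps}|=k,
\]
with $L_{\eps}\to\infty$ as $\eps\to 0$. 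For any $v\in W^{1,p}(\Omega_{\eps})$, Fubini slicing in the $x_{1}$-direction produces, for a.e.\ $x'=(x_{2},\dots,x_{n})$, a slice $I_{x'}\subset(-\eps,\eps)$ of length $\le 2\eps$ on which the $1$-D Robin variational inequality reads
\[
\int_{I_{x'}}|\de_{1}v|^{p}\,dx_{1}+\gamma\bigl(|v(x_{1}^{-},x')|^{p}+|v(x_{1}^{+},x')|^{p}\bigr)\ge\mu_{\eps}\int_{I_{x'}}|v|^{p}\,dx_{1},
\]
where $\mu_{\eps}$ is the first $1$-D Robin eigenvalue of the $p$-Laplacian on $(-\eps,\eps)$ with constant weight $\gamma$; monotonicity of this $1$-D eigenvalue in the interval length (itself a short consequence of the scaling $L^{-p}\ell_{1}^{1D}(L^{p-1}\gamma,(0,1))$) allows the use of $\mu_{\eps}$ on every slice. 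Integrating in $x'$ and bounding $|\de_{1}v|^{p}\le|\D v|^{p}$ and $|\nu_{1}|\le 1$ on the boundary yields $\ell_{\mathcal E}(\gamma,\Omega_{\eps})\ge\mu_{\eps}$.

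The hard part is to show $\mu_{\eps}\to+\infty$. The map $\gamma'\mapsto\ell_{1}^{1D}(\gamma',(0,1))$ is an infimum of functions that are affine in $\gamma'$ (each Rayleigh quotient is affine in the Robin weight with the test function fixed), hence concave, and it vanishes at $\gamma'=0$; elementary concavity then gives $\ell_{1}^{1D}(\gamma',(0,1))\ge c\gamma'$ on $\gamma'\in(0,1]$ with $c=\ell_{1}^{1D}(1,(0,1))>0$. Combined with the $1$-D scaling $\mu_{\eps}=(2\eps)^{-p}\ell_{1}^{1D}\!\bigl(\gamma(2\eps)^{p-1},(0,1)\bigr)$, this produces $\mu_{\eps}\ge c\gamma/(2\eps)\to+\infty$, which closes (iv). The main obstacle throughout is precisely this quantitative lower bound on the $1$-D Robin eigenvalue for small weight, for which the concavity-of-infimum argument is the cleanest route since explicit eigenfunctions are unavailable for $p\ne 2$.
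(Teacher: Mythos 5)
Your items (i)--(iii) reproduce the paper's proof essentially verbatim: the dilation $x=ty$ in the Rayleigh quotient for (i), the constant test function for (ii), and the sandwich $a|\xi|\le F(\xi)\le b|\xi|$ (hence $a\le F(\nu)\le b$) for (iii) are exactly the paper's computations. The genuine divergence is in (iv). The paper reduces to constant $\bar\beta$ by monotonicity and then invokes the anisotropic inradius bound \eqref{st_pota} quoted from \cite{pota}, evaluated on thin $n$-rectangles $\Omega_k$ with $R_F(\Omega_k)\to 0$; you instead reduce to the Euclidean constant-weight problem via (iii) and monotonicity in $\beta$, flatten smooth ellipsoids of fixed volume, and prove divergence by Fubini slicing together with a one-dimensional Robin bound: concavity of $\gamma'\mapsto \ell_1^{1D}(\gamma',(0,1))$ (an infimum of functions affine in $\gamma'$, vanishing at $\gamma'=0$) gives $\ell_1^{1D}(\gamma')\ge c\gamma'$ for $\gamma'\le 1$, the scaling $\mu_\eps=(2\eps)^{-p}\ell_1^{1D}\bigl(\gamma(2\eps)^{p-1},(0,1)\bigr)$ then yields $\mu_\eps\ge c\gamma/(2\eps)\to\infty$, and the same concavity-plus-scaling argument legitimately supplies the interval-length monotonicity needed to handle all slices at once. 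This is correct and self-contained: it does not rely on the maximum-principle estimate behind \eqref{st_pota}, and your ellipsoids are $C^\infty$, so they fit the standing assumption \eqref{dom_cond} better than the paper's rectangles, which are only Lipschitz. The price is length, plus two small points you should make explicit: the positivity $c=\ell_1^{1D}(1,(0,1))>0$ (the one-dimensional analogue of Proposition \ref{eig_betavar}, or a short direct argument), and the restriction $\gamma(2\eps)^{p-1}\le 1$, valid for small $\eps$, under which the concavity bound applies. By contrast the paper's route is shorter and quantitatively sharper, giving a lower bound of order $\bar\beta/R_F$ in terms of the anisotropic inradius, whereas yours gives only the (sufficient) rate $c\gamma/(2\eps)$ along the chosen family.
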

\begin{proof}
 By the homogeneity of $F$, we have: 

\begin{multline*}
\ell_1\left(\beta\left(\frac{x}{t}\right), t\Omega\right)=\min_{\substack{\varphi\in W^{1,p}(t\Omega)\\ \varphi\neq 0}}
\frac{\ds\int_{t\Omega} F^{p}(\D \varphi(x))dx +\int_{\de(t\Omega)}\beta\left(\frac{x}{t}\right)|\varphi(x)|^{p}F(\nu(x))d\haus(x)}{\ds\int_{t\Omega}|\varphi(x)|^{p}dx}\\[.15cm]
=\min_{\substack{v\in W^{1,p}(\Omega)\\ v\neq 0}}
\frac{t^{-p}\ds\int_{t\Omega} F^{p}\left(\D v\left(\frac{x}{t}\right)\right)dy +\int_{\de(t\Omega)}\beta\left(\frac{x}{t}\right)|v\left(\frac{x}{t}\right)|^{p}F\left(\nu\left(\frac{x}{t}\right)\right)d\haus(y)}{t^{n}\ds\int_{\Omega}|v(y)|^{p}dy}=\\[.15cm]
=\min_{\substack{v\in W^{1,p}(\Omega)\\ v\neq 0}}
\frac{t^{n-p}\ds\int_{\Omega} F^{p}(\D v(y))dy +t^{n-1}\int_{\de\Omega}\beta(y)|v(y)|^{p}F(\nu(y))d\haus(y)}{t^{n}\ds\int_{\Omega}|v(y)|^{p}dy}=t^{-p}\ell_1( t^{p-1}\beta(y), \Omega).
\end{multline*}

In order to obtain the second property, 
it is sufficient  to  consider a non-zero constant as test function in \eqref{var.beta}. 

Now we prove the  inequality in the right-hand side in  $(iii)$. The proof of the other inequality is similar.  By using \eqref{var.beta} and \eqref{upb}, we obtaine that
\begin{multline*}
\ell_1( \beta, \Omega)=\inf_{\substack{v\in W^{1,p}(\Omega)\\ v\not\equiv 0}}\frac{\ds\int_{\Omega} F^p(\D v) dx+\ds\int_{\de\Omega}\beta(x)\vert v\vert^p F(\nu)d\haus}{\ds\int_{\Omega}\vert v\vert^p dx}\le\\\inf_{\substack{v\in W^{1,p}(\Omega)\\ v\not\equiv 0}}b^{p}\frac{\ds\int_{\Omega} |\D v|^{p} dx+b^{1-p}\ds\int_{\de\Omega}\beta(x)\vert v\vert^p d\haus}{\ds\int_{\Omega}\vert v\vert^p dx}=\\b^{p}\inf_{\substack{v\in W^{1,p}(\Omega)\\ v\not\equiv 0}}\frac{\ds\int_{\Omega} |\D v|^{p} dx+\ds\int_{\de\Omega}b^{1-p}\beta(x)\vert v\vert^p d\haus}{\ds\int_{\Omega}\vert v\vert^p dx}=b^{p}\ell_{\mathcal E}(b^{1-p}\beta, \Omega),\end{multline*}
where last equality follows, by definition of $\ell_{\mathcal E}(b^{1-p}\beta, \Omega)$.

Finally we give the proof of $(iv)$. Clearly $\ell_1(\beta,\Omega)\ge\ell_1(\bar\beta,\Omega)$, then by \cite[Proposition 3.1]{pota}, we know that  
\begin{equation}
\label{st_pota}
\ell_1( \bar\beta , \Omega ) \ge \left( \frac{p-1}{p} \right)^p \frac{ \bar\beta}{ R_{ F}\left( 1+{\bar\beta}^\frac{1}{p-1} R_{F}\right)}
\end{equation}
where $R_{F}$ is the anisotropic inradius of the subset $\Omega$ . The claim follows constructing a sequence of convex  sets $\Omega_k$  with $\abs{\Omega_k}=1$ and such that $R_{F}(\Omega_k) \rightarrow 0$, for $k \to \infty$.
Let $k >0$, proceeding as in \cite{DpDbG,dpgg},  it is possible to consider the $n-$rectangles  $\Omega_k=   \left] -\frac{1}{2k}, \frac{1}{2k}\right[\times \left]-\frac{k^{\frac{1}{n-1}}}{2},\frac{k^{\frac{1}{n-1}}}{2}\right[^{n-1}$ and suppose that $R_F(\Omega_k) = \displaystyle\frac{1}{2k} F^o(e_{1})$. Then we obtain
\begin{equation*}
\ell_1(\bar\beta, \Omega_k) \ge \left( \frac{p-1}{p} \right)^p \frac{ 4k^2\bar\beta}{ F^o(e_{1})\left( 2k+{\bar\beta}^\frac{1}{p-1} F^o(e_{1})\right)}  \to +\infty\mbox{ for } k \to \infty. 
\end{equation*}
\end{proof} 
\section{The anisotropic radial case}
In this section we recall some properties of the first eigenvalue of $\Qp$  with Robin boundary condition when $\Omega$ is a Wulff shape. We suppose that $\beta=\bar\beta $ is a positive constant then  we consider 
\begin{equation}
\label{minrad}
\ell_{1}(\bar\beta, \mathcal W_{R})=\min_{\substack{v\in W^{1,p}(\mathcal W_{R}) \\ v\not\equiv 0}} J( \bar\beta,v) =\min_{\substack{v\in W^{1,p}(\mathcal W_{R}) \\ v\not\equiv 0}}\dfrac{\ds\int_{\mathcal W_{R}} [F(\D v)]^{p}dx +\bar\beta\int_{\de \mathcal W_{R}}|v|^{p}F(\nu)d\haus}{\ds\int_{\mathcal W_{R}} |v|^{p}dx},
\end{equation}
where $\mathcal W_{R}=R\,\mathcal W=\{x\colon F^{o}(x)<R\}$, with $R>0$, and $\mathcal W$ is the Wulff shape centered at the origin.

By Proposition \ref{eig_betavar}, the minimizers of \eqref{minrad} solve the following problem:
\begin{equation}
  \label{eq:3}
  \left\{
   \begin{array}{ll}
     -\Qp v=\ell_1(\bar\beta,\mathcal W_{R}) 
      |v|^{p-2}v 
      &\text{in } 
      \mathcal W_{R},\\[.2cm] 
      F^{p-1}(\D v)F_\xi(Dv)\cdot\nu +\bar\beta F(\nu) |v|^{p-2}v=0
      &\text{on } \de\mathcal W_{R}.
    \end{array}
  \right.
\end{equation}
In \cite{pota,dan,bd10} the authors prove the following result
\begin{thm}
\label{teorad}
Let $v_{p}\in C^{1,\alpha}(\mathcal W_R)\cap C(\overline{\mathcal W_R})$ be a positive solution to problem \eqref{eq:3}. Then $v_{p}(x)=\varrho_{p}(F^{o}(x))$, with  $x\in  \overline{\mathcal W}_{R}$, where $\varrho_{p}(r)$, $r\in [0,R]$, is a 
 decreasing function  such that  $\varrho_{p}\in C^{\infty}(0,R)\cap C^{1}([0,R])$ and it verifies 
\begin{equation}\label{eqrad}
\left\{
		\begin{array}{l}
		-(p-1)(-\varrho'_{p}(r))^{p-2} \varrho_{p}''(r)+\dfrac{n-1}{r}(-\varrho'_{p}(r))^{p-1}	
	=\ell_1(\bar\beta,\mathcal W_{R}) \varrho_{p}(r)^{p-1},\quad r\in]0,R[,
						\\[.15cm]
			\varrho_{p}'(0)=0,\\[.15cm]
			-(-\varrho_{p}'(R))^{p-1} + \bar\beta (\varrho_{p}(R)) ^{p-1}=0.
		\end{array} 
	\right.
\end{equation}
\end{thm}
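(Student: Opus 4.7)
The plan is to construct one anisotropic radial positive solution of \eqref{eq:3} and then invoke the simplicity of the first eigenvalue (Proposition \ref{eig_betavar}) together with the characterization in Proposition \ref{carac_eig_betavar}. First, I would restrict the Rayleigh quotient in \eqref{minrad} to functions of the form $v(x)=\varrho(F^{o}(x))$. Using $F(F_{\xi}^{o}(x))=1$, $\langle F_{\xi}^{o}(x),x\rangle=F^{o}(x)$, the Wulff coarea formula
\begin{equation*}
\int_{\mathcal W_{R}}g(F^{o}(x))\,dx = n\kappa_{n}\int_{0}^{R}g(r)\,r^{n-1}\,dr,
\end{equation*}
and $P_{F}(\mathcal W_{R})=n\kappa_{n}R^{n-1}$, the Rayleigh quotient reduces (after cancelling the common factor $n\kappa_{n}$) to
\begin{equation*}
\widetilde J[\varrho]=\frac{\ds\int_{0}^{R}|\varrho'(r)|^{p}r^{n-1}\,dr+\bar\beta|\varrho(R)|^{p}R^{n-1}}{\ds\int_{0}^{R}|\varrho(r)|^{p}r^{n-1}\,dr}.
\end{equation*}
Standard direct methods in the weighted Sobolev space $W^{1,p}((0,R),\,r^{n-1}dr)$ produce a nonnegative minimizer $\varrho_{p}$ of $\widetilde J$.

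Second, I would verify by direct computation that $v(x):=\varrho_{p}(F^{o}(x))$ is a weak solution of \eqref{eq:3} for the eigenvalue $\widetilde\ell:=\widetilde J[\varrho_{p}]$. Since $\D v=\varrho_{p}'(F^{o}(x))\,F_{\xi}^{o}(x)$, the identity $F_{\xi}(F_{\xi}^{o}(x))=x/F^{o}(x)$ together with the $0$-homogeneity and oddness of $F_{\xi}$ gives
\begin{equation*}
F^{p-1}(\D v)F_{\xi}(\D v)=-(-\varrho_{p}'(F^{o}(x)))^{p-1}\,\frac{x}{F^{o}(x)}
\end{equation*}
wherever $\varrho_{p}'<0$. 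Taking the divergence and using $\dive(x/F^{o}(x))=(n-1)/F^{o}(x)$ yields precisely the radial equation in \eqref{eqrad}. The Robin condition at $\partial\mathcal W_{R}$ reduces to the stated one-dimensional relation at $r=R$ via $F(\nu)=1/|\D F^{o}|$ on $\{F^{o}=R\}$ and $F_{\xi}(F_{\xi}^{o})\cdot\nu=1/|\D F^{o}|$; the condition $\varrho_{p}'(0)=0$ is forced by continuity of $\D v$ at the origin, and is consistent with the natural boundary condition arising from the vanishing of the weight $r^{n-1}$ at $r=0$.

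At this point Proposition \ref{carac_eig_betavar} forces $\widetilde\ell=\ell_{1}(\bar\beta,\mathcal W_{R})$ and makes $v$ a first eigenfunction; the simplicity part of Proposition \ref{eig_betavar} then gives that every positive solution $v_{p}$ of \eqref{eq:3} is a positive multiple of $v$, proving $v_{p}(x)=\varrho_{p}(F^{o}(x))$. For the monotonicity and regularity of $\varrho_{p}$, I would rewrite the ODE in weighted divergence form $(r^{n-1}|\varrho_{p}'|^{p-2}\varrho_{p}')'=-\ell_{1}(\bar\beta,\mathcal W_{R})\,r^{n-1}\varrho_{p}^{p-1}$ and integrate from $0$ to $r$ using $\varrho_{p}'(0)=0$, obtaining
\begin{equation*}
r^{n-1}|\varrho_{p}'(r)|^{p-2}\varrho_{p}'(r)=-\ell_{1}(\bar\beta,\mathcal W_{R})\int_{0}^{r}s^{n-1}\varrho_{p}(s)^{p-1}\,ds.
\end{equation*}
Once strict positivity $\varrho_{p}>0$ on $(0,R)$ is established (by the anisotropic strong maximum principle, exploiting the sign of $\bar\beta$ in the Robin condition), the right-hand side is strictly negative on $(0,R]$, so $\varrho_{p}'<0$ there, the equation is nondegenerate in $(0,R)$, and standard ODE theory delivers $\varrho_{p}\in C^{\infty}(0,R)\cap C^{1}([0,R])$.

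The step I expect to be the most delicate is the strict interior positivity of $\varrho_{p}$, since without it one cannot exclude an interior critical point of $\varrho_{p}$ where the $p$-Laplacian degenerates and where the integrated ODE no longer forces monotonicity. This is the anisotropic analogue of the strong maximum/Hopf lemma for the weighted radial $p$-Laplacian, and care is needed to accommodate both the degeneracy of the operator and the Robin condition at $r=R$.
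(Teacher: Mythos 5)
Your proof is correct and is in substance the same argument the paper relies on: Theorem \ref{teorad} is not proved in the paper but quoted from \cite{pota,dan,bd10}, and those references proceed exactly as you do — the radial ansatz $v(x)=\varrho_{p}(F^{o}(x))$ verified through the identities $F(F^{o}_{\xi}(x))=1$, $\langle F^{o}_{\xi}(x),x\rangle=F^{o}(x)$ and $F^{o}(x)F_{\xi}(F^{o}_{\xi}(x))=x$, combined with Proposition \ref{carac_eig_betavar} and the simplicity statement in Proposition \ref{eig_betavar} (which correctly replaces the rotation-invariance argument unavailable in the anisotropic setting), with the one-dimensional monotonicity and regularity coming from the radial ODE analysis of \cite{bd10,dan}. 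Two minor remarks: the step you flag as delicate is actually elementary, since for the nonnegative one-dimensional minimizer the integrated identity $r^{n-1}|\varrho_{p}'(r)|^{p-2}\varrho_{p}'(r)=-\ell_{1}(\bar\beta,\mathcal W_{R})\int_{0}^{r}s^{n-1}\varrho_{p}(s)^{p-1}\,ds$ already yields $\varrho_{p}(0)>0$, hence $\varrho_{p}'<0$ on $(0,R]$, excludes an interior zero (the function would become negative just beyond it) and excludes $\varrho_{p}(R)=0$ via the Robin condition (which would force $\varrho_{p}'(R)=0$), so no strong maximum principle is needed; and you should reorder the argument so that this one-dimensional regularity is established \emph{before} the pointwise divergence computation of Step 2, which as written differentiates the ansatz before knowing that $\varrho_{p}$ is $C^{1}$ with the stated sign.
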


\begin{rem}
We observe that   the first eigenvalue in the Wulff $\mathcal W_{R}=\{F^{o}(x)<R\}$ is the same for any norm $F$. In particular it coincides with  the first Robin eigenvalue in the Euclidean ball $B_{R}$ for the $p$-Laplace operator.  Finally we emphasize   that in this case the eigenfunctions have more regularity because $\bar \beta$ is a positive constant. 
\end{rem}
Theorem \ref{teorad}, as in \cite{bd10,pota}, suggests to consider , for every $x\in\W_R$,   the following function 
\begin{equation}
\label{betar}
f(r_{x})= \frac{(-\rho_p^\prime(r_x))^{p-1}}{(\rho_p(r_x))^{p-1}} = \frac{\left[F\big(\D v_{p}(x)\big)\right]^{p-1}}{{v_{p}(x)}^{p-1}} = \frac{\left[F\big(\D v_p(x)\big)\right]^{p-1}F_\xi\big(Dv_p(x)\big)\cdot\nu}{{v_{p}(x)}^{p-1}F(\nu)},
\end{equation}
where
\begin{equation*}
\label{rx}
 r_x=F^o(x),  \quad 0 \le r_{x} \le R.
\end{equation*} 
Let us observe that $f$ is nonnegative, $f(0)=0$ and 
\[
f(R)=\frac{(-\rho_p^\prime(R))^{p-1}}{(\rho_p(R))^{p-1}} = \frac{\left[F\big(\D v_{p}(x)\big)\right]^{p-1}}{{v_{p}(x)}^{p-1}} = \frac{\left[F\big(\D v_p(x)\big)\right]^{p-1}F_\xi\big(\D v_p(x)\big)\cdot\nu}{{v_{p}(x)}^{p-1}F(\nu)}=\bar\beta
\]
 

The following result proved in the Euclidean case in \cite{bd10} and in \cite{pota} in the anisotropic case,  states that the first Robin eigenvalue is monotone decreasing with respect the set inclusion in the class of Wulff shapes. 
\begin{lem}
\label{monotonia1}
The function $r \to \ell_{1}(\bar\beta,\mathcal W_{r})$  is strictly decreasing in $]0,\infty[$.
\end{lem}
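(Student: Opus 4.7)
The plan is to reduce Lemma \ref{monotonia1} to a concavity property of the first Robin eigenvalue as a function of the boundary weight, exploiting the scaling identity Theorem \ref{th:lbdpr2}(i).

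First I would fix $0<s<r$ and set $t:=r/s>1$. Applying Theorem \ref{th:lbdpr2}(i) with $\Omega=\mathcal W_{s}$ and the constant weight $\bar\beta$ yields
\[
\ell_{1}(\bar\beta,\mathcal W_{r})=t^{-p}\,\ell_{1}\!\left(t^{p-1}\bar\beta,\mathcal W_{s}\right),
\]
so the claim $\ell_{1}(\bar\beta,\mathcal W_{r})<\ell_{1}(\bar\beta,\mathcal W_{s})$ is equivalent to the single inequality
\[
\ell_{1}\!\left(t^{p-1}\bar\beta,\mathcal W_{s}\right)<t^{p}\,\ell_{1}(\bar\beta,\mathcal W_{s}).
\]

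Next, I would introduce $\mu(b):=\ell_{1}(b,\mathcal W_{s})$ for $b\ge 0$ and establish its concavity on $[0,+\infty)$. From the variational characterization \eqref{var.beta}, for each fixed admissible $v$ the Rayleigh quotient $J[b,v]$ is affine in $b$ (the numerator is linear and the denominator is independent of $b$), and an infimum of affine functions of $b$ is concave. Moreover $\mu(0)=0$ (attained by nonzero constants), while $\mu(b)>0$ for $b>0$ by Proposition \ref{eig_betavar}.

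From concavity together with $\mu(0)=0$, writing $\bar\beta=\frac{1}{c}(c\bar\beta)+\bigl(1-\frac{1}{c}\bigr)\cdot 0$ and applying Jensen with $c\ge 1$, I get the sublinear bound $\mu(c\bar\beta)\le c\,\mu(\bar\beta)$. Specializing to $c=t^{p-1}\ge 1$ gives
\[
\ell_{1}\!\left(t^{p-1}\bar\beta,\mathcal W_{s}\right)\le t^{p-1}\,\ell_{1}(\bar\beta,\mathcal W_{s}).
\]
Since $t>1$ and $p>1$ force $t^{p-1}<t^{p}$, and $\ell_{1}(\bar\beta,\mathcal W_{s})>0$, the above inequality strengthens for free to the desired strict inequality.

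I do not anticipate any serious obstacle: the only non-trivial ingredient is concavity of $\mu$, which follows at once from its variational definition. A pleasant feature of this approach is that \emph{strict} concavity of $\mu$ is never needed—the strict monotonicity in the Lemma is produced entirely by the elementary gap $t^{p-1}<t^{p}$.
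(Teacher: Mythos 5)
Your argument is correct, and every ingredient is already available inside the paper: the scaling identity is Theorem \ref{th:lbdpr2}(i) applied with $\Omega=\mathcal W_s$ and the constant weight $\bar\beta$ (admissible, since $m=\bar\beta\, P_F(\mathcal W_s)>0$, so \eqref{m} holds); concavity of $\mu(b)=\ell_1(b,\mathcal W_s)$ follows because \eqref{var.beta} exhibits $\mu$ as an infimum of functions affine in $b$; $\mu(0)=0$ by testing with constants (note $b=0$ violates \eqref{m}, but you only invoke the Rayleigh quotient, not existence or simplicity of a minimizer, so this is harmless); and $\mu(\bar\beta)>0$ is Proposition \ref{eig_betavar}. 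The chain $\ell_1(\bar\beta,\mathcal W_r)=t^{-p}\mu\left(t^{p-1}\bar\beta\right)\le t^{-1}\mu(\bar\beta)<\mu(\bar\beta)=\ell_1(\bar\beta,\mathcal W_s)$ then yields strictness exactly as you say, with no strict concavity needed.

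As for comparison: the paper itself gives no proof of Lemma \ref{monotonia1} --- it quotes the result from \cite{bd10} and \cite{pota} --- and the mechanism in those references is precisely this scaling-plus-concavity argument, so your write-up reconstructs the cited proof rather than diverging from it; its virtue here is that it is self-contained within the paper's own toolkit and uses no information about the radial eigenfunction. An alternative, also internal to the paper and non-circular (Theorem \ref{mono1} does not rely on Lemma \ref{monotonia1}), would be to apply Theorem \ref{mono1} with $\Omega=\mathcal W_s\subset\mathcal W_r$ and upgrade to strict inequality via Lemma \ref{lemmabeta}: the weight $f$ satisfies $f(s)<f(r)=\bar\beta$ on $\de\mathcal W_s$, and the boundary integral of the positive eigenfunction is nonzero, which forces strictness in the Rayleigh-quotient comparison; that route, however, requires the finer radial analysis of Section 4, which your proof avoids entirely.
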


In \cite{bd10} and \cite{pota} the authors prove also the following monotonicity  property for the function $f$ defined in \eqref{betar}.
\begin{lem}
\label{lemmabeta}
Let $f$ be the function defined in \eqref{betar}. Then $f(r)$ is strictly increasing in $[0,R]$.
\end{lem}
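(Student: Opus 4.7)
The plan is to derive a first-order ODE satisfied by $f$ on $(0,R]$ and then rule out interior critical points of $f$ by differentiating that ODE once more.

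Set $u(r):=-\varrho_p'(r)\ge 0$ and $v(r):=\varrho_p(r)>0$, so that $v'=-u$ and $f=u^{p-1}/v^{p-1}$. First I would rewrite the radial equation \eqref{eqrad} in divergence form,
\[
\bigl(r^{n-1}u^{p-1}\bigr)'=\ell_1(\bar\beta,\mathcal W_R)\,r^{n-1}v^{p-1},
\]
which after integration from $0$ shows $u(r)>0$, and hence $f(r)>0$, for every $r\in(0,R]$, and yields the asymptotic $f(r)=\ell_1(\bar\beta,\mathcal W_R)\,r/n+o(r)$ as $r\to 0^+$; in particular $f'(0^+)=\ell_1(\bar\beta,\mathcal W_R)/n>0$.

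Substituting the identity $(u^{p-1})'=\ell_1(\bar\beta,\mathcal W_R)v^{p-1}-\tfrac{n-1}{r}u^{p-1}$ into the quotient rule for $f=u^{p-1}/v^{p-1}$ and using $v'=-u$, a short calculation gives the Riccati-type equation
\[
f'(r)=\ell_1(\bar\beta,\mathcal W_R)-\tfrac{n-1}{r}f(r)+(p-1)\,f(r)^{p/(p-1)}\qquad\text{on }(0,R],
\]
and differentiating once more yields
\[
f''(r)=\tfrac{n-1}{r^2}f(r)-\tfrac{n-1}{r}f'(r)+p\,f(r)^{1/(p-1)}f'(r).
\]
The key observation is that at any point where $f'=0$ the last two terms drop out and $f''$ inherits the strictly positive sign of $f$.

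Since $f'(0^+)>0$, $f$ is strictly increasing near $0$. Arguing by contradiction, suppose there is a smallest $r_0\in(0,R]$ with $f'(r_0)=0$; then $f'>0$ on $(0,r_0)$ forces $f''(r_0)\le 0$, whereas the identity above gives $f''(r_0)=\tfrac{n-1}{r_0^2}f(r_0)>0$, a contradiction. Hence $f'>0$ on $(0,R)$, and continuity extends strict monotonicity to the whole $[0,R]$.

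The main obstacle is the behaviour at the origin: for $p\ne 2$ the coefficient $u^{p-2}$ becomes singular as $r\to 0^+$, so the sign of $f'(0^+)$ must be extracted from the integrated form rather than from pointwise evaluation of the ODE. One should also exclude the pathological case $f'\equiv 0$ on a left neighbourhood of $r_0$; on such an interval $f$ would be constant, and plugging into the Riccati identity and differentiating in $r$ forces $f\equiv 0$, contradicting both $\ell_1(\bar\beta,\mathcal W_R)>0$ (Proposition \ref{eig_betavar}) and the strict positivity of $f$ on $(0,R]$.
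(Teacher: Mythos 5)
Your proof is correct, and it is essentially the argument the paper relies on: the paper does not reprove Lemma \ref{lemmabeta} itself (it cites \cite{bd10} and \cite{pota}), but your Riccati equation is precisely the identity \eqref{eqf} that the paper derives in Theorem \ref{tconv}, and the cited proofs proceed exactly as you do. Namely, they use $f(r)=\ell_1(\bar\beta,\mathcal W_R)\,r/n+o(r)$ at the origin and the fact that at any first critical point $r_0$ one would have $f''(r_0)=\frac{n-1}{r_0^2}f(r_0)>0$ (using $f>0$ on $]0,R]$ from the integrated equation and $n\ge 2$), contradicting $f''(r_0)\le 0$ forced by $f'>0$ on $]0,r_0[$.
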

In the next result we prove a convex property  for the function $f$.
\begin{thm}
\label{tconv}
Let $f$ be the function defined in \eqref{betar}. Then the function
\begin{equation*}
g(z)=f(z^{\frac 1 n}) z^{1-\frac{1}{n}} , \qquad  0 \le z \le R^{n}, 
\end{equation*}
is convex with respect to $z$.
\end{thm}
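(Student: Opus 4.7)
The plan is to express $g$ in a form where its first derivative is a monotone function of the quantity controlled by Lemma \ref{lemmabeta}. Let $\rho:=\rho_p$, $\ell:=\ell_1(\bar\beta,\mathcal W_R)$ and introduce the auxiliary function
\begin{equation*}
\phi(r):=\frac{-\rho'(r)}{\rho(r)}, \qquad r\in[0,R],
\end{equation*}
so that $f(r)=\phi(r)^{p-1}$. Since $\rho'(0)=0$ and $\rho>0$, $\phi$ is well-defined, nonnegative, and $\phi(0)=0$.

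First I would rewrite the radial equation \eqref{eqrad} in divergence form. Multiplying by $r^{n-1}$, one checks that
\begin{equation*}
\frac{d}{dr}\Bigl[r^{n-1}(-\rho'(r))^{p-1}\Bigr]=\ell\,\rho(r)^{p-1}r^{n-1},
\end{equation*}
which is simply the standard rewriting of the ODE as the radial divergence of the flux. Setting $\Phi(r):=r^{n-1}(-\rho'(r))^{p-1}$, we have $\Phi'(r)=\ell\,\rho(r)^{p-1}r^{n-1}$, and by construction $g(z)=f(r)\,r^{n-1}=\Phi(r)/\rho(r)^{p-1}$ with $r=z^{1/n}$.

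Next I would differentiate $g$ with respect to $z$ using $dr/dz=r^{1-n}/n$. The chain rule gives
\begin{equation*}
\frac{dg}{dz}=\left[\frac{\Phi'(r)}{\rho(r)^{p-1}}-\frac{(p-1)\Phi(r)\rho'(r)}{\rho(r)^{p}}\right]\frac{r^{1-n}}{n}.
\end{equation*}
Since $\Phi'(r)r^{1-n}=\ell\,\rho(r)^{p-1}$ and $\Phi(r)r^{1-n}=(-\rho'(r))^{p-1}$, this collapses to
\begin{equation*}
\frac{dg}{dz}=\frac{\ell}{n}+\frac{(p-1)}{n}\Bigl(\tfrac{-\rho'(r)}{\rho(r)}\Bigr)^{p}=\frac{\ell+(p-1)\,\phi(r)^{p}}{n}.
\end{equation*}

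Finally I would conclude by monotonicity. Because $r\mapsto r^{1/n}$ is increasing, $\phi(r)^{p}$ is an increasing function of $z$ if and only if $\phi(r)$ is increasing in $r$. But Lemma \ref{lemmabeta} asserts that $f(r)=\phi(r)^{p-1}$ is strictly increasing in $[0,R]$, hence so is $\phi$ and therefore $\phi^p$. Consequently $dg/dz$ is nondecreasing in $z$, proving that $g$ is convex (in fact strictly convex) on $[0,R^n]$.

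The only non-routine step is the divergence rewriting of the ODE together with the cancellations leading to the clean expression $(\ell+(p-1)\phi^p)/n$ for $g'$; once this identity is in hand, the convexity reduces directly to the monotonicity of $f$ already established in Lemma \ref{lemmabeta}.
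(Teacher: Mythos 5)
Your proof is correct and takes essentially the same route as the paper: both arguments use the radial ODE \eqref{eqrad} to derive the identity $g'(z)=\frac{1}{n}\left(\ell_1(\bar\beta,\mathcal W_R)+(p-1)f^{\frac{p}{p-1}}(z^{\frac1n})\right)$ and then conclude from the monotonicity of $f$ (Lemma \ref{lemmabeta}). Your divergence-form rewriting via the flux $\Phi(r)=r^{n-1}(-\rho_p'(r))^{p-1}$ is merely different bookkeeping for the paper's direct computation of $f'$ in \eqref{eqf}; the cancellation and the final monotonicity step are identical.
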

\begin{proof}
We first observe that by  \eqref{eqrad} it holds
\begin{multline}
\label{eqf}
f'(r)=\frac{d}{dr}\left(\frac{-\rho_p^\prime(r)}{\rho_p(r)}  \right)^{p-1}=(p-1)\displaystyle f^{\frac{p-2}{p-1}} \left( \frac{-\rho_p^{\prime\prime}}{\rho_p}+\left(\frac{\rho_p^{\prime}}{\rho_p}\right)^{2}\right)\\=\ell_1(\bar\beta,\mathcal W_{R})  - \frac{(n-1)}{r}f+(p-1)\displaystyle f^{\frac{p}{p-1}} \quad \forall r \in ]0,R[.
\end{multline}
 Then 
\begin{multline*}
g'(z)=\frac{1}{n} f'(z^{\frac 1 n})+\frac{(n-1)}{n} \frac{f(z^{\frac 1 n})}{z^{\frac 1 n}} \\ =\frac{1}{n}\left(\ell_1(\bar\beta,\mathcal W_{R})  - \frac{(n-1)}{z^{\frac 1 n}}f(z^{\frac 1 n})+(p-1)\displaystyle f^{\frac{p}{p-1}}(z^{\frac 1 n})\right)+ \frac{(n-1)}{n} \frac{f(z^{\frac 1 n})}{z^{\frac 1 n}}\\=\frac{\ell_1(\bar\beta,\mathcal W_{R})}{n}+\frac{(p-1)}{n}\displaystyle f^{\frac{p}{p-1}}(z^{\frac 1 n}),
\end{multline*}
 which is increasing and this implies the thesis.
\end{proof}
Finally the following comparison result for $f$ holds
\begin{thm}
\label{confr} 
Let $f$ be the function defined in \eqref{betar}. Then  there exists a positive constant $C=C(R)$ such that 
\begin{equation*}
\label{comp}
f(r) \le r \, C(R),  \,\,\, \text{ for }\,\, 0 \le r \le R
\end{equation*}
\end{thm}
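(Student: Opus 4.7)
The plan is to exploit the convexity of $g$ established in Theorem \ref{tconv}, combined with the boundary value $f(R)=\bar\beta$ recorded after \eqref{betar} and the fact that $f(0)=0$.

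First I would observe that since $f(0)=0$ and $n\ge 2$, the function $g(z)=f(z^{1/n})z^{1-1/n}$ satisfies $g(0)=0$. By Theorem \ref{tconv}, $g$ is convex on $[0,R^{n}]$. A standard consequence of convexity together with $g(0)=0$ is that the secant-slope function $z\mapsto g(z)/z$ is nondecreasing on $(0,R^{n}]$.

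Next I would rewrite this slope in terms of $f$. With the change of variable $r=z^{1/n}$,
\begin{equation*}
\frac{g(z)}{z}=\frac{f(z^{1/n})\,z^{1-1/n}}{z}=\frac{f(r)}{r},
\end{equation*}
so $r\mapsto f(r)/r$ is nondecreasing on $(0,R]$. Evaluating at $r=R$ and using $f(R)=\bar\beta$ yields
\begin{equation*}
\frac{f(r)}{r}\;\le\;\frac{f(R)}{R}=\frac{\bar\beta}{R}\qquad\text{for every } r\in(0,R].
\end{equation*}
The claim then holds with $C(R)=\bar\beta/R$, and it extends trivially to $r=0$ since $f(0)=0$.

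There is really no hard step here: the substantive work has already been done in Theorem \ref{tconv}, which produced the convexity of $g$ and, crucially, vanishing of $g$ at the origin. The only thing to be slightly careful about is the exponent matching in the change of variable $z=r^{n}$, which turns $g(z)/z$ into precisely $f(r)/r$; once this is spotted, convexity together with $g(0)=0$ immediately gives the linear bound.
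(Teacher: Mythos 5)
Your proof is correct, but it follows a genuinely different route from the paper. You derive the linear bound purely from the convexity of $g(z)=f(z^{1/n})z^{1-1/n}$ (Theorem \ref{tconv}) together with $g(0)=0$ and $f(R)=\bar\beta$: convexity plus vanishing at the origin makes the secant slope $g(z)/z$ nondecreasing, and the change of variable $z=r^n$ turns this into monotonicity of $f(r)/r$, whence $f(r)\le (\bar\beta/R)\,r$. (One small attribution point: $g(0)=0$ is not produced by Theorem \ref{tconv} but by the observation $f(0)=0$ made after \eqref{betar}; also, Theorem \ref{tconv} only computes $g'$ on the open interval, so you implicitly use that $g$ extends continuously to $z=0$, which holds since $\varrho_p\in C^1([0,R])$ and $\varrho_p>0$.) The paper instead works directly with the Riccati-type ODE \eqref{eqf}: using Lemma \ref{lemmabeta} it freezes the superlinear term at $f(R)=\bar\beta$ to obtain the differential inequality $f'(r)\le C(R)-\frac{n-1}{r}f(r)$ with $C(R)=\ell_1(\bar\beta,\mathcal W_R)+(p-1)\bar\beta^{p/(p-1)}$ as in \eqref{costc}, multiplies by $r^{n-1}$ and integrates, getting $f(r)\le C(R)r/n$. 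Your argument is shorter and yields the \emph{optimal} constant: since $f(r)/r$ is nondecreasing, $\bar\beta/R=\sup_{r}f(r)/r$, and indeed evaluating the paper's bound at $r=R$ shows $\bar\beta/R\le C(R)/n$. What the paper's version buys is the explicit constant \eqref{costc}, which is quoted verbatim in the hypothesis \eqref{ipbeta1} of the Faber--Krahn theorem; with your sharper constant that growth assumption on $w$ could in fact be weakened to $w(t)\ge(\bar\beta/R)\,t$.
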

\begin{proof}
By  \eqref{eqf} and by Lemma \ref{lemmabeta} we obtain that $f$ verifies the following equation
\begin{equation}
\label{app1}
f'(r)=\ell_1(\bar\beta,\mathcal W_{R})  - \frac{(n-1)}{r}f(r)+(p-1)\displaystyle f^{\frac{p}{p-1}}(r) \le  C(R) - \frac{(n-1)}{r}f(r) 
\end{equation}
where 
\begin{equation}
\label{costc}
C(R)=\ell_1(\bar\beta,\mathcal W_{R})+(p-1)\displaystyle f(R)^{\frac{p}{p-1}}=\ell_1(\bar\beta,\mathcal W_{R})+(p-1)\displaystyle \bar\beta^{\frac{p}{p-1}}
\end{equation}
Then by \eqref{app1} multiplying both sides by $r^{n-1}$ we get
\begin{equation*}
f'(r)r^{n-1}+(n-1)r^{n-2}f(r)\le C(R)r^{n-1},
\end{equation*}
and 
\[
\displaystyle\frac{d}{dr} \left( r^{n-1}f(r)\right)\le C(R)r^{n-1}
\]
Then the claim follows integrating both sides between $0$ and $r$.
\end{proof}
\begin{rem}
\label{rwii}
The  results contained in  Lemma \ref{lemmabeta} and Theorem \ref{tconv} ensures that 
$f(r)$ is an admissible weight for the  weighted anisotropic isoperimetric inequality  quoted in \eqref{wii}\end{rem}
\section{ A monotonicity property for  $\ell_{1}(\bar\beta;\Omega)$} 

In this section we assume that $\beta=\bar\beta $ is a positive constant. The first Robin eigenvalue $\ell_1(\bar\beta, \Omega)$ has  not, in general,  a monotonicity  property with respect the set inclusion.  For instance  in  \cite{dd}  in the Euclidean case, for the Laplace operator, the authors give a counterexample. More precisely, they    construct  a suitable sequence of sets $\Omega_k$ such that  $P_{\mathcal E}(\Omega_k) \to \infty$, $B_1(0) \subset \Omega_k \subset B_{1+\eps}(0)$ which verify
\[
\ell_{\mathcal E}(\bar \beta, \Omega_k) > \ell_{\mathcal E}(\bar \beta, B_1(0)) >\ell_{\mathcal E}(\bar \beta, B_2(0)). 
\]
Here $B_r(x_o)$ denotes the Euclidean ball with radius $r$ and centered at the pint $x_o$ and $\lambda_{\mathcal E}(B_{1+\eps}(0))$ is the  Euclidean first  Dirichlet  eigenvalue of the Laplacian of the ball  $B_{1+\eps}(0)$.
 In what follows we prove a monotonicity  type property for the first Robin eigenvalue of the operator $\Qp$ with respect the set inclusion. In the Euclidean case for the Laplace operator  we refer the reader for instance to \cite{gs}.  
\begin{thm}
\label{mono1}
Let $\Omega \subset \R^{n}$ be a bounded open set with $C^{1,\alpha}$ boundary,  $\alpha \in ]0,1[$.
Let $\mathcal{W}_R$ be a Wulff shape such that  $\Omega\subset\mathcal{W}_R$ and $\bar \beta$ a positive constant. Then
 \[
    \ell_1(\bar\beta, \mathcal{W}_R)\leq\ell_1(\bar\beta,\Omega).
 \]
\end{thm}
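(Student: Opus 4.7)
Let $u\in W^{1,p}(\Omega)$ be the positive first eigenfunction on $\Omega$ given by Proposition \ref{eig_betavar}, and let $v(x)=\varrho_p(F^o(x))$ be the positive radial first eigenfunction on $\mathcal W_R$ given by Theorem \ref{teorad}. Since $v\in C^{1,\alpha}(\overline{\mathcal W_R})$ and $v>0$ on $\overline{\mathcal W_R}\supset\overline\Omega$, the function $v$ is bounded below by a positive constant on $\overline\Omega$. The plan is to compare the two Rayleigh quotients via the anisotropic Picone inequality applied on $\Omega$, taking $v$ as the base function and $u$ as the test, and then read off the difference $\ell_1(\bar\beta,\Omega)-\ell_1(\bar\beta,\mathcal W_R)$ from a boundary term whose sign is controlled by the monotonicity of the function $f$ from \eqref{betar}.

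The anisotropic Picone inequality
\[
F^p(\D u)\ge F^{p-1}(\D v)\,F_\xi(\D v)\cdot\D\!\left(\frac{u^p}{v^{p-1}}\right)\quad\text{a.e.\ in }\Omega
\]
is integrated over $\Omega$; since $v$ solves $-\Qp v=\ell_1(\bar\beta,\mathcal W_R)v^{p-1}$ pointwise in $\mathcal W_R\supset\Omega$, an integration by parts on $\Omega$ using the admissible test function $u^p/v^{p-1}\in W^{1,p}(\Omega)$ yields
\[
\int_\Omega F^p(\D u)\,dx\ge \ell_1(\bar\beta,\mathcal W_R)\int_\Omega u^p\,dx+\int_{\de\Omega} \frac{F^{p-1}(\D v)\,F_\xi(\D v)\cdot\nu}{v^{p-1}}\,u^p\,d\haus.
\]
Substituting $\int_\Omega F^p(\D u)=\ell_1(\bar\beta,\Omega)\int_\Omega u^p-\bar\beta\int_{\de\Omega}u^p F(\nu)\,d\haus$ (the Rayleigh identity for $u$) and rearranging gives
\[
\bigl(\ell_1(\bar\beta,\Omega)-\ell_1(\bar\beta,\mathcal W_R)\bigr)\int_\Omega u^p\,dx\ge\int_{\de\Omega} u^p\left[\bar\beta F(\nu)+\frac{F^{p-1}(\D v)\,F_\xi(\D v)\cdot\nu}{v^{p-1}}\right]d\haus,
\]
so it remains to show the integrand on the right is non-negative on $\de\Omega$.

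For the pointwise boundary step, set $r_x=F^o(x)$ and write $\D v(x)=\varrho'_p(r_x)F^o_\xi(x)$ with $\varrho'_p\le 0$; the preliminary identity $F^o(x)F_\xi(F^o_\xi(x))=x$ combined with the $0$-homogeneity of $F_\xi$ then gives $F^{p-1}(\D v)=(-\varrho'_p(r_x))^{p-1}$ and $F_\xi(\D v)\cdot\nu=(x\cdot\nu)/r_x$, so the bracketed quantity equals $\bar\beta F(\nu)+f(r_x)(x\cdot\nu)/r_x$ with $f$ as in \eqref{betar}. The anisotropic Cauchy--Schwarz inequality $|x\cdot\nu|\le F^o(x)F(\nu)$ yields $(x\cdot\nu)/r_x\ge -F(\nu)$, while Lemma \ref{lemmabeta} together with $f(R)=\bar\beta$ gives $f(r_x)\le\bar\beta$ for every $x\in\overline\Omega\subset\overline{\mathcal W_R}$; these two inequalities combine to bound the integrand below by $(\bar\beta-f(r_x))F(\nu)\ge 0$, which concludes the proof. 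The hardest part is precisely this last pointwise boundary step: one has to identify the conormal flux $v^{1-p}F^{p-1}(\D v)F_\xi(\D v)\cdot\nu$ through the $F$--$F^o$ duality and then combine anisotropic Cauchy--Schwarz with the monotonicity of $f$ on $[0,R]$, where the inclusion hypothesis $\Omega\subset\mathcal W_R$ enters exactly via $r_x\le R$. Side technicalities (admissibility of $u^p/v^{p-1}$, which follows from $v$ being bounded away from $0$ on $\overline\Omega$, and the removable singularity of $\D v$ at the origin if $0\in\Omega$) are routine.
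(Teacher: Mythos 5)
Your argument is correct, and it reaches the conclusion by a route that is organizationally different from the paper's, though driven by the same engine. The paper's proof is shorter at the top level: it observes that $v_p$, restricted to $\Omega$, is a positive solution of a Robin problem on $\Omega$ with variable boundary coefficient (written as $f(r_x)$ in \eqref{bou1}), invokes the characterization Proposition \ref{carac_eig_betavar} to conclude that $\ell_1(\bar\beta,\mathcal W_R)$ \emph{is} the first Robin eigenvalue of that problem, and then compares boundary weights via Lemma \ref{lemmabeta}, $f(r_x)\le f(R)=\bar\beta$, using monotonicity of the Rayleigh quotient in $\beta$. Since Proposition \ref{carac_eig_betavar} is itself proved (in \cite{pota}) by testing with $u^p/(v+\eps)^{p-1}$, i.e.\ by exactly the Picone mechanism you deploy, your proof amounts to inlining that proposition and doing the boundary-flux analysis by hand. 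What your version buys is self-containedness and, more substantively, an explicit treatment of a point the paper states loosely: on $\de\Omega$ the outer normal $\nu$ is in general not the Wulff-sphere normal, so the conormal flux of $v_p$ is $-(-\varrho_p'(r_x))^{p-1}\,\frac{x\cdot\nu}{r_x}$, and the paper's identity \eqref{bou1} (equivalently, the third expression in \eqref{betar} evaluated on $\de\Omega$) holds as an equality only where $\nu$ aligns with $F^o_\xi(x)$; at general boundary points one has an inequality of the favorable sign, precisely via the anisotropic Cauchy--Schwarz bound $|x\cdot\nu|\le F^o(x)F(\nu)$ that you make explicit. One harmless slip: since $\varrho_p'\le 0$ and $F_\xi$ is positively $0$-homogeneous and odd, $F_\xi(\D v_p)\cdot\nu=-(x\cdot\nu)/r_x$, not $+(x\cdot\nu)/r_x$; with the corrected sign your bracket reads $\bar\beta F(\nu)-f(r_x)\,(x\cdot\nu)/r_x$, and the same two ingredients ($x\cdot\nu\le r_x F(\nu)$ together with $0\le f(r_x)\le\bar\beta$ from Lemma \ref{lemmabeta}) yield the identical lower bound $\bigl(\bar\beta-f(r_x)\bigr)F(\nu)\ge 0$, so the conclusion is unaffected.
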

 \begin{proof}
Let $v_p$ be the positive eigenfunction associated to $\ell_1( \bar\beta,\mathcal{W}_R)$ and let $\Omega$ be a subset of $\mathcal{W}_R$.  

Then for every $x\in\de\Omega$, we can consider $f(r_{x})$ as in \eqref{betar} in order to get that the following Robin boundary condition on $\de \Omega$  holds
\begin{equation}\label{bou1}
\left[F \big(\D v_p(x)\big)\right]^{p-1}F_\xi\big(\D v_p(x)\big)\cdot\nu+f(r_x){v_p(x)}^{p-1}F(\nu)=0.
\end{equation}
Having in mind that $\Omega\subset\W_R$ and using \eqref{bou1}, we have that $v_p$ solves the following problem
\begin{equation}
\label{pvp}
 \begin{cases}
  -\Qp v_p=\ell_{1}(\bar\beta,\mathcal{W}_R)v_p^{p-1} & \mbox{in}\ \Omega\\[.2cm]
	\left[F \big(\D v_p\big)\right]^{p-1}F_\xi\big(\D v_p\big)\cdot\nu+f(r_x){v_p}^{p-1}F(\nu)=0 & \mbox{on}\ \partial\Omega
 \end{cases}
\end{equation}     
Using \eqref{pvp} and Lemma \ref{lemmabeta}
\begin{equation*}
\begin{split}
\ell_1(\bar\beta,\mathcal{W}_R) & = \frac{\displaystyle\int_\Omega [F(\D v_p)]^pdx+\displaystyle\int_{\partial\Omega}f(r_x)\vert v_p\vert^p F(\nu)d\haus}{\displaystyle\int_\Omega \vert v_p\vert^pdx}\\
  & =\inf_{u\in W^{1,p}(\Omega)\setminus\{0\}}\frac{\displaystyle\int_\Omega [F(\D u)]^pdx+\displaystyle\int_{\partial\Omega}f(r_x)\vert u\vert^p F(\nu)d\haus}{\displaystyle\int_\Omega \vert u\vert^pdx}\\
  & \leq\inf_{u\in W^{1,p}(\Omega)\setminus\{0\}}\frac{\displaystyle\int_\Omega [F(\D u)]^pdx+\displaystyle\int_{\partial\Omega}\bar\beta\vert u\vert^p F(\nu)d\haus}{\displaystyle\int_\Omega \vert u\vert^pdx}\\[0.2cm] 
	& = \ell_1(\bar\beta,\Omega)
\end{split}
\end{equation*}
 \end{proof}
When $\Omega$ contains a Wulff shape we have the following result
\begin{thm}
\label{mono2}
Let $\Omega \subset \R^{n}$ be a bounded and convex open set with $C^{1,\alpha}$ boundary,  $\alpha \in ]0,1[$. Let $\mathcal{W}_R$ be a Wulff shape  such that $\mathcal{W}_R\subset\Omega$, then
 \[
   \ell_1(\bar\beta, \Omega)\leq\ell_1( \bar\beta,\mathcal{W}_R).
 \]
\begin{proof}
 First of all, we take the positive eigenfunction $v_p$ associated to $\ell_1(\bar\beta,\W_R)$.  By Theorem 4.1 $v_p(x)=\varrho_{p}(F^o(x))$, and by \eqref{eqrad}  we  can extend $\varrho_{p}$ up to $+ \infty$ and then  $v_p$ in   $\R^n$.   
Let us consider the super-level set 
\begin{equation*}
\W_{+}=\{x \in \R^n \colon v_p(x)>0\}.
\end{equation*}
By the property of $v_{p}$ $\W_{+}$ is a Wulff shape and clearly $\W_R \subset \W_+$. 

 Moreover, $v_p$ solves the following equation
 \begin{equation*}\label{eqVP}
   -\Qp v_p= \ell_1(\bar\beta, \W_R )v_p^{p-1}\ \mbox{in}\ \W_{+}.
 \end{equation*}
To prove the Theorem we consider the  set $\tilde{\Omega}=\Omega\cap\W_{+}$. Being $\Omega $ convex and due to the radially decreasing of the eigenfunction,  three possible cases can occour.\\ 

\textit{Case 1:}  $\de\tilde{\Omega}=\de\Omega$. Then in this case $\W_R \subset \Omega\subset\W_{+}$ and $\tilde{\Omega}=\Omega$. Then for $x \in \de \Omega$ we  put $r_x=F^o(x)$ and  we can compute 
$$f(r_x)=\frac{(-\rho_p^\prime(r_x))^{p-1}}{(\rho_p(r_x))^{p-1}}.$$
Then   arguing as in the proof of Theorem \ref{mono1} and recalling that by Lemma \ref{lemmabeta},  $f(r_x)\ge \bar \beta$, for any $x \in \de \Omega$  we get
 \begin{equation*}
  \begin{split}
	 \ell_1(\bar\beta,\mathcal{W}_R) & = \frac{\displaystyle\int_\Omega [F(\D v_p)]^pdx+\displaystyle\int_{\partial\Omega}f(r_x)\vert v_p\vert^p F(\nu)d\sigma}{\displaystyle\int_\Omega \vert v_p\vert^pdx}\\
  & =\inf_{u\in W^{1,p}(\Omega)\setminus\{0\}}\frac{\displaystyle\int_\Omega [F(\D u)]^pdx+\displaystyle\int_{\partial\Omega}f(r_x)\vert u\vert^p F(\nu)d\sigma}{\displaystyle\int_\Omega \vert u\vert^pdx}\\
  & \geq\inf_{u\in W^{1,p}(\Omega)\setminus\{0\}}\frac{\displaystyle\int_\Omega [F (\D u)]^pdx+\displaystyle\int_{\partial\Omega}\bar\beta\vert u\vert^p F(\nu)d\sigma}{\displaystyle\int_\Omega \vert u\vert^pdx}\\[0.2cm] 
	& = \ell_1(\bar\beta,\Omega)
	\end{split}
 \end{equation*}
and the first case is proved.\\

\textit{Case 2:} $\de\tilde{\Omega}\cap\de\Omega\neq\emptyset$ and $\de\tilde{\Omega}\cap\de\Omega\neq\de\Omega$. Then $\de\tilde{\Omega}\cap\W_{+}\neq\emptyset$. Moreover, on $\de\tilde{\Omega}\cap\de\Omega$ the eigenfunction $v_p$ is positive, while on $\de\tilde{\Omega}\cap\de\W_{+}$ it is equal to zero. In particular, for every $x\in\de\tilde{\Omega}\cap\de\Omega$ we still have that $f(r_x) \ge \bar\beta$ as in the \textit{Case 1}. We define the following test function $\varphi\in W^{1,p}(\Omega)$ 
 \begin{equation*}
\varphi(x)=
  \begin{cases}
	v_p(x) & \mbox{in}\ \tilde{\Omega}\\
	0 & \mbox{in}\ \Omega\setminus\tilde{\Omega}.
	\end{cases}
 \end{equation*}
 Then
\begin{equation*}
  \begin{split}
	 \ell_1( \bar\beta,\mathcal{W}_R) & = \frac{\displaystyle\int_{\tilde{\Omega}} [F(\D v_p)]^pdx+\displaystyle\int_{\partial\tilde{\Omega}\cap \de \Omega}f(r_x)v_p^p F(\nu)d\haus}{\displaystyle\int_{\tilde{\Omega}} v_p^pdx}\\
  & =\frac{\displaystyle\int_\Omega [F(\D \varphi)]^pdx+\displaystyle\int_{\de\tilde{\Omega}\cap\partial\Omega}f(r_x)\varphi^p F(\nu)d\haus}{\displaystyle\int_\Omega v^pdx}\\
	& \geq\frac{\displaystyle\int_\Omega [F(\D \varphi)]^pdx+\displaystyle\int_{\de\tilde{\Omega}\cap\partial\Omega}\bar\beta \varphi^p F(\nu)d\haus}{\displaystyle\int_\Omega v^pdx}\\
  &=\frac{\displaystyle\int_\Omega [F (\D v)]^pdx+\displaystyle\int_{\partial\Omega}\bar\beta \varphi^p F(\nu)d\haus}{\displaystyle\int_\Omega  v^pdx}
	\end{split}
	\end{equation*}
	\begin{equation*}
	\begin{split}
	\, & \geq\inf_{u\in W^{1,p}(\Omega)\setminus\{0\}}\frac{\displaystyle\int_\Omega [F (\D u)]^pdx+\displaystyle\int_{\partial\Omega}\bar\beta\vert u\vert^p F(\nu)d\haus}{\displaystyle\int_\Omega \vert u\vert^pdx}\\[0.2cm]
	& = \ell_1(\bar\beta, \Omega)
	\end{split}
 \end{equation*}
and the second case is proved.\\

\textit{Case 3:} $\de\tilde{\Omega}\cap\de\Omega =\emptyset$. Then $\tilde{\Omega}=\W_{+}\subset\Omega$. Using the monotonicity result in Lemma \ref{monotonia1} we obtain that $\ell_1(\bar\beta,\W_R)\geq\ell_1(\bar\beta,\W_{+})$. Denoting with $v_p^{(1)}$ the eigenfunction associated to $ \ell_1(\bar\beta, \W_{+}) $  and defining $\tilde{\Omega}^{(1)}=\Omega\cap\{v_p^{(1)}(x)>0\}$ and repeating the division in three possible cases, after a finite number of steps we could be either in \textit{Case 1} or in \textit{Case 2}.
\end{proof}
\end{thm}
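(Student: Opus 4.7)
The plan is to use the positive radial eigenfunction of $\mathcal{W}_R$, continued radially past $\partial\mathcal{W}_R$, as a test function in the Rayleigh quotient \eqref{var.beta} for $\ell_1(\bar\beta,\Omega)$.

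First I would take the first eigenfunction $v_p(x)=\varrho_p(F^o(x))$ on $\mathcal{W}_R$ from Theorem \ref{teorad} and extend $\varrho_p$ beyond $r=R$ by solving the ODE in \eqref{eqrad} without imposing its Robin condition. Since $\varrho_p(R)>0$ and $\varrho_p'(R)<0$, standard ODE continuation gives a positive decreasing extension up to some first vanishing radius $R^{*}>R$; the positivity set $\mathcal{W}_{+}=\{v_p>0\}$ is then a Wulff shape strictly containing $\mathcal{W}_R$ on which $-\Qp v_p=\ell_1(\bar\beta,\mathcal{W}_R)v_p^{p-1}$. The key ingredient is the radial Robin coefficient $f$ from \eqref{betar}: by Lemma \ref{lemmabeta} it is strictly increasing, hence $f(r)\ge f(R)=\bar\beta$ for every $r\in[R,R^{*})$. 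Because $\mathcal{W}_R\subset\Omega$, every $x\in\partial\Omega$ satisfies $F^o(x)\ge R$, so at any such $x$ lying in $\overline{\mathcal{W}_{+}}$ one has $f(r_x)\ge\bar\beta$.

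Next I would split into three subcases according to the geometry of $\tilde\Omega=\Omega\cap\mathcal{W}_{+}$. \textbf{Case 1:} $\tilde\Omega=\Omega$, so $v_p$ is admissible in $\Omega$; there it solves the anisotropic equation with Robin coefficient $f(r_x)$ on $\partial\Omega$, and the variational identity combined with $f(r_x)\ge\bar\beta$ gives
\[
\ell_1(\bar\beta,\mathcal{W}_R)\ge\frac{\int_{\Omega}F^{p}(\D v_p)\,dx+\int_{\partial\Omega}\bar\beta\,v_p^{p}\,F(\nu)\,d\haus}{\int_{\Omega}v_p^{p}\,dx}\ge\ell_1(\bar\beta,\Omega).
\]
\textbf{Case 2:} $\partial\tilde\Omega\cap\partial\Omega\ne\emptyset$ but $\tilde\Omega\subsetneq\Omega$. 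Use $\varphi=v_p$ on $\tilde\Omega$ and $\varphi=0$ on $\Omega\setminus\tilde\Omega$; this is an admissible test function in $W^{1,p}(\Omega)$ because $v_p\equiv 0$ on $\partial\mathcal{W}_{+}$, and the boundary integral reduces to an integral over $\partial\tilde\Omega\cap\partial\Omega$ where again $f(r_x)\ge\bar\beta$, so the same chain of inequalities applies. \textbf{Case 3:} $\mathcal{W}_{+}\subsetneq\Omega$, i.e. the extension never reaches $\partial\Omega$. Here the direct test-function strategy fails, but Lemma \ref{monotonia1} yields $\ell_1(\bar\beta,\mathcal{W}_R)\ge\ell_1(\bar\beta,\mathcal{W}_{+})$, so I would restart the whole construction from $\mathcal{W}_{+}$ and iterate.

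The delicate step is Case 3: one must argue that the iteration, which produces a strictly increasing sequence of Wulff radii all sitting inside the bounded set $\Omega$, lands in Case 1 or Case 2 in finitely many steps. Convexity of $\Omega$ is used both to guarantee that the three-case dichotomy is exhaustive and to ensure that $\tilde\Omega$ has a clean connected structure at each stage (so that the extension-by-zero in Case 2 is $W^{1,p}$ and the boundary decomposition is unambiguous). Once those geometric points are settled, the remainder is a routine variational computation resting on the two anisotropic inputs already in hand: the radial representation of Theorem \ref{teorad} and the monotonicity $f\ge\bar\beta$ from Lemma \ref{lemmabeta}.
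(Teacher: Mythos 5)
Your proposal follows the paper's proof essentially step for step: extend the radial eigenfunction $\varrho_p$ past $R$ via the ODE \eqref{eqrad}, form $\W_{+}=\{v_p>0\}$, split into the same three cases for $\tilde\Omega=\Omega\cap\W_{+}$, use $f(r_x)\ge\bar\beta$ from Lemma \ref{lemmabeta} (with the truncated test function in Case 2), and iterate via Lemma \ref{monotonia1} in Case 3. The termination of the Case 3 iteration, which you rightly flag as the delicate point but leave open, is likewise only asserted without proof in the paper, so your argument is at the same level of completeness as the original.
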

By Theorems \ref{mono1} and \ref{mono2} we get the following monotonicity property for $\ell_{1}$ for constant $\beta$.
\begin{cor}
Let $\Omega_{1}, \Omega_{2} \subset \R^{n}$ be as in \eqref{dom_cond} and  convex. Let $\mathcal{W}_R$ be a Wulff shape  such that $\Omega_{1} \subset\mathcal{W}_R\subset\Omega_{2}$. Then $\ell_{1}(\bar\beta, \Omega_{2}) \le \ell_{1}(\bar\beta, \Omega_{1}) $.
\end{cor}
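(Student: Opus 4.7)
The plan is to use the two monotonicity theorems, Theorem \ref{mono1} and Theorem \ref{mono2}, in sequence as a sandwich argument. The statement is essentially a corollary in the literal sense: no new work is needed beyond plugging the hypotheses into the two preceding results.

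First, I would apply Theorem \ref{mono2} to the pair $(\mathcal{W}_R,\Omega_2)$. Here $\Omega_2$ is bounded, convex, with $C^{1,\alpha}$ boundary, and by hypothesis $\mathcal{W}_R\subset\Omega_2$, so the hypotheses of Theorem \ref{mono2} are met. That theorem yields
\[
\ell_1(\bar\beta,\Omega_2)\le \ell_1(\bar\beta,\mathcal{W}_R).
\]
Next, I would apply Theorem \ref{mono1} to the pair $(\Omega_1,\mathcal{W}_R)$. Since $\Omega_1$ is a bounded open set with $C^{1,\alpha}$ boundary and $\Omega_1\subset\mathcal{W}_R$ by hypothesis, Theorem \ref{mono1} applies and gives
\[
\ell_1(\bar\beta,\mathcal{W}_R)\le \ell_1(\bar\beta,\Omega_1).
\]
Chaining the two inequalities yields the conclusion $\ell_1(\bar\beta,\Omega_2)\le \ell_1(\bar\beta,\Omega_1)$.

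There is no genuine obstacle here, since Theorem \ref{mono1} does not require convexity (only a set containment inside a Wulff shape), while Theorem \ref{mono2} does require convexity of the larger set; both of these compatibility conditions are automatically satisfied by the hypotheses of the corollary. The only thing worth noting is the role of the Wulff shape $\mathcal{W}_R$ as an intermediate comparison set: monotonicity of $\ell_1$ under set inclusion fails in general, and the Wulff shape is precisely the bridge that makes the sandwich work, exploiting the radial representation of the eigenfunction on $\mathcal{W}_R$ that underlies both auxiliary theorems.
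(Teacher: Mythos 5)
Your proof is correct and is exactly the paper's argument: the corollary is stated there as an immediate consequence of Theorems \ref{mono1} and \ref{mono2}, applied precisely as you do, with the Wulff shape $\mathcal{W}_R$ serving as the intermediate comparison set. Your side remark that convexity is only needed for the larger set (Theorem \ref{mono2}) while Theorem \ref{mono1} requires none is also accurate and matches the hypotheses as stated.
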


\section{A representation formula for $\ell_1(\beta, \Omega )$}
In this section we prove a level set representation formula for the first eigenvalue $\ell_{1}( \beta , \Omega )$ of the following problem
\begin{equation}\label{bvp.var.beta}
\begin{cases}
-\Qp v=\ell \vert v\vert^{p-2}v &\mbox{in}\ \Omega\\[.2cm]
F^{p-1}(\D v)F_\xi(\D v)\cdot\nu + \beta F(\nu)\vert v\vert^{p-2}v=0 &\mbox{on}\ \de\Omega.
\end{cases}
\end{equation} 
When $\beta=\bar\beta$ is a nonnegative constant a similar result can be found in  \cite{bd10} in the Euclidean case and in \cite{pota} for the anisotropic case.  Our aim is to extend the known results assuming that $\beta$ is in general a function defined on $\de \Omega$. In the next we will use the following notation. Let $\utilp$ be the first positive eigenfunction such that $\max \utilp=1$. Then, for $t\in[0,1]$,
\begin{equation*}
	\begin{array}{l}
		U_{t} =\{x\in \Omega\colon \utilp>t\},\\[.1cm]
		S_{t} =\{x\in \Omega\colon \utilp=t\},\\[.1cm]
		\Gamma_{t}=\{x\in \de\Omega\colon \utilp>t\}.
	\end{array}
\end{equation*}
\begin{thm}\label{ug_beta_var}
Let $\Omega \subset \R^{n}$ be a bounded open set with $C^{1,\alpha}$ boundary and let $\alpha \in ]0,1[$.
Let $\beta$ be a function belonging to $L^1(\de\Omega)$, $\beta\geq 0$ and such that \eqref{m} holds. Let  $\utilp \in C^{1,\alpha}(\Omega) \cap L^{\infty}(\Omega)$ be a positive minimizer of $\eqref{var.beta}$ with $\norm{\utilp}_{\infty} = 1$. Then for a.e. $t \in ]0,1[ $   the following representation formula holds
\begin{equation}\label{rep.ell1}
\ell_1(\beta , \Omega) = \mathcal{F}_{\Omega}\left(U_t , \frac{\left[F(\D \utilp)\right]^{p-1}}{{\utilp}^{p-1}}\right),
\end{equation}
where $\mathcal{F}_{\Omega}$ is defined as 
\begin{equation}\label{F.betavar}
\mathcal{F}_{\Omega}(U_{t},\varphi)=\frac{1}{|U_{t}|}\left(
-(p-1)\int_{U_{t}} \varphi^{p'} \,dx + \int_{ S_{t}} 
\varphi F(\nu)\,d\sigma +
 \int_{ \Gamma_{t}}\beta F(\nu)\,d\sigma\right).
\end{equation}
\end{thm}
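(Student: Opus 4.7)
The plan is to derive the identity pointwise from the Euler--Lagrange equation, divide through by $\utilp^{p-1}$ to get a pure divergence, and then integrate over $U_t$ using the divergence theorem, splitting the boundary $\partial U_t = S_t \cup \Gamma_t$ and substituting the Robin condition on $\Gamma_t$.

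First I would work on $U_t$ where $\utilp > t > 0$, so that division by $\utilp^{p-1}$ is legitimate and all quantities are smooth by the $C^{1,\alpha}$ regularity from Proposition \ref{eig_betavar}. Setting $X:=F^{p-1}(\D\utilp)F_\xi(\D\utilp)$, so that $-\dive X = \ell_1(\beta,\Omega)\,\utilp^{p-1}$ pointwise a.e., I would apply the elementary identity
\begin{equation*}
\dive\!\left(\frac{X}{\utilp^{p-1}}\right) = \frac{\dive X}{\utilp^{p-1}} - (p-1)\,\frac{X\cdot\D\utilp}{\utilp^{p}} .
\end{equation*}
The homogeneity relation $F_\xi(\xi)\cdot\xi=F(\xi)$ gives $X\cdot\D\utilp = F^p(\D\utilp)$, and writing $\varphi=F^{p-1}(\D\utilp)/\utilp^{p-1}$ one has $F^p(\D\utilp)/\utilp^p = \varphi^{p'}$. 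Hence, on $U_t$,
\begin{equation*}
\dive\!\left(\frac{X}{\utilp^{p-1}}\right) = -\ell_1(\beta,\Omega) - (p-1)\,\varphi^{p'}.
\end{equation*}

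Next I would integrate this identity over $U_t$ and apply the divergence theorem. For a.e.\ $t\in(0,1)$, Sard's theorem guarantees that $t$ is a regular value of $\utilp$, so $S_t$ is a $C^{1,\alpha}$ hypersurface on which $\D\utilp\neq 0$ and on which the outer unit normal of $U_t$ is $\nu = -\D\utilp/|\D\utilp|$. Using that $F$ is even and $1$-homogeneous, one computes $F(\nu)=F(\D\utilp)/|\D\utilp|$ and
\begin{equation*}
\frac{X\cdot\nu}{\utilp^{p-1}} = -\frac{F^p(\D\utilp)}{|\D\utilp|\,\utilp^{p-1}} = -\varphi\,F(\nu)\quad\text{on } S_t.
\end{equation*}
On the other component $\Gamma_t\subset\partial\Omega$, the outer normal of $U_t$ agrees with that of $\Omega$, so the Robin condition from \eqref{bvp.var.beta} gives $X\cdot\nu = -\beta\,F(\nu)\,\utilp^{p-1}$, hence $(X\cdot\nu)/\utilp^{p-1}=-\beta F(\nu)$ on $\Gamma_t$. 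Substituting these two boundary contributions yields
\begin{equation*}
-\int_{S_t}\varphi\,F(\nu)\,d\sigma-\int_{\Gamma_t}\beta\,F(\nu)\,d\sigma = -\ell_1(\beta,\Omega)\,|U_t|-(p-1)\int_{U_t}\varphi^{p'}\,dx,
\end{equation*}
and solving for $\ell_1(\beta,\Omega)$ gives precisely \eqref{rep.ell1} with $\mathcal F_\Omega$ as in \eqref{F.betavar}.

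The main obstacle will be the rigorous justification of the divergence theorem on $U_t$ for a.e.\ $t$: one has to ensure that $U_t$ is a set of finite perimeter whose boundary splits (up to $\mathcal H^{n-1}$-null sets) into the regular level set $S_t$ inside $\Omega$ and the relatively open portion $\Gamma_t$ of $\partial\Omega$. This is handled by combining Sard's theorem, the coarea formula, and the $C^{1,\alpha}$ regularity of both $\utilp$ and $\partial\Omega$ (Proposition \ref{eig_betavar} and hypothesis \eqref{dom_cond}); the potentially problematic set $\{\D\utilp=0\}\cap S_t$ is empty for a.e.\ $t$. Once this is established, everything else in the argument is an algebraic identity combined with a boundary-condition substitution, and the formula follows for a.e.\ $t\in(0,1)$.
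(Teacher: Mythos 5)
Your integration-by-parts identity is the right algebraic skeleton, and indeed it is the formal shadow of what the paper proves; but two of your justifications fail under the regularity actually available. First, Sard's theorem does not apply: $\utilp$ is only $C^{1,\alpha}(\Omega)$, while Sard for a real-valued function on $\R^n$ requires $C^n$ smoothness (Whitney's classical example is a $C^1$ function on $\R^2$ that is non-constant along a connected set of critical points), so you cannot conclude that a.e.\ $t$ is a regular value and that $S_t$ is a $C^1$ hypersurface with $\D\utilp\neq 0$ on it. This particular gap is repairable — the coarea formula gives $\int_{\{\D\utilp=0\}}\abs{\D\utilp}\,dx=0$, hence $\mathcal H^{n-1}\bigl(S_t\cap\{\D\utilp=0\}\bigr)=0$ for a.e.\ $t$ — but that only makes the critical set $\haus$-null, not empty, and leaves $S_t$ without a classical hypersurface structure. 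Second, and more seriously, your substitution on $\Gamma_t$ uses the Robin condition \emph{pointwise}: the eigenfunction is only a weak solution in the sense of Definition \ref{aut}, $\beta$ is merely in $L^1(\de\Omega)$, and the $C^{1,\alpha}$ regularity of $\utilp$ is interior, so the field $X=F^{p-1}(\D\utilp)F_\xi(\D\utilp)$ is not known to extend continuously to $\overline\Omega$ and $X\cdot\nu$ has no pointwise meaning on $\de\Omega$. For the same reason $\dive X$ exists only distributionally, so the classical divergence theorem on $U_t$ is unavailable; a Gauss--Green theorem for divergence-measure fields would still require identifying the generalized normal trace of $X$ on $\Gamma_t$ with $-\beta F(\nu)\utilp^{p-1}$, and that identification \emph{is} the weak formulation \eqref{weak.problem}, not a consequence of it.

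The paper avoids all of this by never leaving the weak formulation: it inserts into \eqref{weak.problem} the explicit truncations $\psi_\eps\in W^{1,p}(\Omega)\cap L^\infty(\de\Omega)$, equal to $\utilp^{1-p}$ on $U_{t+\eps}$, interpolated on $U_t\setminus U_{t+\eps}$, and zero below level $t$. The boundary term then reads $\int_{\de\Omega}\beta\,\utilp^{p-1}\psi_\eps F(\nu)\,d\haus$, which converges to $\int_{\Gamma_t}\beta F(\nu)\,d\haus$ by dominated convergence with no trace of $\D\utilp$ ever invoked, while the gradient term converges, via the coarea formula and a Lebesgue-point argument in $t$, to $-(p-1)\int_{U_t}[F(\D\utilp)]^p\utilp^{-p}\,dx+\int_{S_t}[F(\D\utilp)]^{p-1}\utilp^{1-p}F(\nu)\,d\haus$ for a.e.\ $t$ — exactly your two interior and $S_t$ contributions, now rigorously. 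To repair your route you would have to rebuild precisely this test-function limit, so the missing idea is the mechanism for handling the boundary condition weakly, not the algebra of the divergence identity.
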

  \begin{proof}
Let $0 < \eps < t < 1$ and we define 
\begin{equation*}
\psi_\eps =
\begin{cases}
0 & \mbox{if}\ \utilp \le t\\[.4cm]
\ds\frac{\utilp}{\eps}\frac{1}{\utilp^{p-1}} & \mbox{if}\ t < \utilp < t+\eps\\
\ds\frac{1}{\utilp^{p-1}} & \mbox{if}\ \utilp \ge t+\eps.
\end{cases}
\end{equation*}
The functions $\psi_\eps$ are in $W^{1,p}(\Omega)$ and increasingly converge to $\utilp^{-(p-1)}\chi_{U_{t}}$ as $\eps\rightarrow 0$.\\
Moreover, we can obtain that
\begin{equation*}
\D\psi_\eps=
\begin{cases}
0 & \mbox{if}\ \utilp < t\\[.4cm]
\ds\frac{1}{\eps}\left((p-1)\ds\frac{t}{\utilp}+2-p\right)\frac{\D \utilp}{\utilp^{p-1}} & \mbox{if}\ t < \utilp < t+\eps\\
-(p-1)\ds\frac{\D \utilp}{\utilp^p} & \mbox{if}\ \utilp > t+\eps.
\end{cases}
\end{equation*}
Then choosing $\psi_\eps$ as test function in \eqref{weak.problem}, we get that the first integral is
\begin{multline*}
-(p-1)\int_{U_{t+\eps}} \frac{[F(\D \utilp)]^{p}}{ \utilp^{p}} dx +
\frac 1 \eps \int_{U_{t}\setminus U_{t+\eps}} \frac{[F(\D \utilp)]^{p}}{\utilp^{p-1}}\left( (p-1)\frac{t}{\utilp}+2-p \right)dx = \\ =
-(p-1)\int_{U_{t+\eps}} \frac{[F(\D \utilp)]^{p}}{ \utilp^{p}} dx + 
\frac 1 \eps \int_{t}^{t+\eps} \left( (p-1)\frac{t}{\tau}+2-p \right) \int_{S_{\tau}} \frac{[F(\D \utilp)]^{p-1}}{\utilp^{p-1}} F(\nu) d\haus,
\end{multline*}
where last equality follows by the coarea formula. Then, reasoning as in \cite{bd10} and \cite{pota} we get that
\[
	\int_{\Omega} [F(\D \utilp)]^{p-1}F_{\xi}(\D \utilp) \cdot \D\psi_{\eps} dx  	\xrightarrow{\eps\rightarrow 0} -(p-1) \int_{U_{t}} \frac{[F(\D \utilp)]^{p}}{\utilp^{p}} dx + \int_{S_{t}} \frac{[F(\D \utilp)]^{p-1}}{\utilp^{p-1}} F(\nu) d\haus.
\]
As regards the other two integrals in \eqref{weak.problem}, we have
\begin{multline*}
\int_{\de\Omega}\beta \utilp^{p-1}\psi_{\eps}F(\nu)\,d\haus =
\int_{\Gamma_{t+\eps}}\beta F(\nu)\,d\haus+ 
\int_{\Gamma_{t}\setminus\Gamma_{t+\eps}}\beta\frac{\utilp-t}{\eps} F(\nu)\,d\haus\xrightarrow{\eps\rightarrow 0}\\
\xrightarrow{\eps\rightarrow 0} \int_{\Gamma_{t}}\beta F(\nu)\,d\haus,
\end{multline*}
by dominated convergence theorem and by monotone convergence theorem and the definition of $\psi_\eps$,
\[
\ell_{1}(\beta, \Omega)\int_{\Omega} \utilp^{p-1} \psi_{\eps}dx \;\;\xrightarrow{\eps\rightarrow 0}\;\; \ell_{1}(\beta , \Omega)|U_{t}|.
\]
Summing the three limits, we get \eqref{rep.ell1}.
	\end{proof}
	When we consider a generic test function we have
	\begin{thm}
	Let $\Omega \subset \R^{n}$ be a bounded open set with $C^{1,\alpha}$ boundary and let $\alpha \in ]0,1[$.
	Let $\varphi$ be a nonnegative function in $\Omega$ such that $\varphi\in L^{p'}(\Omega)$, where $p'=\frac{p}{p-1}$. If $\varphi\not\equiv [F(\D \utilp)]^{p-1}/\utilp^{p-1}$, where $\utilp$ is the eigenfunction given in Theorem \ref{ug_beta_var}, and $\mathcal F_{\Omega}$ is the functional defined in \eqref{F.betavar}, then there exists a set $S\subset ]0,1[$ with positive measure such that for every $t\in S$ it holds that
\begin{equation}
	\label{tesithm}
	\ell_{1}( \beta, \Omega)>\mathcal{ F}_{\Omega}(U_{t},\varphi).
\end{equation}
	\end{thm}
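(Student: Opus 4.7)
The plan is to leverage the representation formula of Theorem~\ref{ug_beta_var}. Setting $\varphi^{\ast}:=[F(\D\utilp)]^{p-1}/\utilp^{p-1}$, that theorem gives $\mathcal{F}_{\Omega}(U_t,\varphi^{\ast})=\ell_1(\beta,\Omega)$ for almost every $t\in(0,1)$; subtracting from $\mathcal{F}_{\Omega}(U_t,\varphi)$ cancels the common boundary integral on $\Gamma_t$ and produces
\[
\bigl[\ell_1(\beta,\Omega)-\mathcal{F}_{\Omega}(U_t,\varphi)\bigr]|U_t|=-(p-1)\int_{U_t}\bigl[(\varphi^{\ast})^{p'}-\varphi^{p'}\bigr]dx+\int_{S_t}(\varphi^{\ast}-\varphi)F(\nu)\,d\sigma.
\]

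Next I would exploit the strict convexity of $s\mapsto s^{p'}$. Since $(\varphi^{\ast})^{p'-1}=F(\D\utilp)/\utilp$ (from $(p-1)(p'-1)=1$), convexity yields the pointwise inequality $(\varphi^{\ast})^{p'}-\varphi^{p'}\le (p'F(\D\utilp)/\utilp)(\varphi^{\ast}-\varphi)$, with equality if and only if $\varphi=\varphi^{\ast}$. Setting
\[
H(s):=\int_{S_s}(\varphi^{\ast}-\varphi)F(\nu)\,d\sigma,\qquad K(t):=\int_{t}^{1}\frac{H(s)}{s}\,ds,
\]
the coarea formula, combined with $F(\nu)\,d\sigma=F(\D\utilp)/|\D\utilp|\,d\haus$ on each $S_s$, identifies $\int_{U_t}(F(\D\utilp)/\utilp)(\varphi^{\ast}-\varphi)\,dx=K(t)$. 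Integrating the Young inequality over $U_t$, multiplying by $(p-1)$, and using $(p-1)p'=p$ together with $H(t)=-tK'(t)$, I obtain the key estimate
\begin{equation*}
\bigl[\ell_1(\beta,\Omega)-\mathcal{F}_{\Omega}(U_t,\varphi)\bigr]|U_t|\ge -pK(t)+H(t)=-\frac{1}{t^{p-1}}\frac{d}{dt}\bigl(t^{p}K(t)\bigr),
\end{equation*}
which becomes an equality at level $t$ exactly when $\varphi=\varphi^{\ast}$ almost everywhere on $U_t$.

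Finally I would multiply by $t^{p-1}$ and integrate over $(0,1)$. Since $K(1)=0$ and $t^{p}K(t)\to 0$ as $t\to 0^{+}$ (a consequence of $\utilp\in L^{\infty}(\Omega)$ and the $L^{\infty}$-bound on $F(\D\utilp)$ from Proposition~\ref{eig_betavar}, together with $\varphi\in L^{p'}(\Omega)$), integration by parts yields
\[
\int_0^1 t^{p-1}\bigl[\ell_1(\beta,\Omega)-\mathcal{F}_{\Omega}(U_t,\varphi)\bigr]|U_t|\,dt\ge -\bigl[t^{p}K(t)\bigr]_{0}^{1}=0.
\]
Because the first eigenfunction is positive, $\bigcup_{t>0}U_t=\Omega$; hence if $\varphi\not\equiv\varphi^{\ast}$ the set $E:=\{\varphi\neq\varphi^{\ast}\}$ has positive measure, and for all sufficiently small $t$ the intersection $U_t\cap E$ has positive measure. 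At such $t$'s the pointwise Young inequality is strict on a positive-measure subset of $U_t$, so the integrated Young inequality and the key estimate above are strict, making the integral on the left strictly positive. Since $t^{p-1}|U_t|>0$ on $(0,1)$, the set $S=\{t\in(0,1):\ell_1(\beta,\Omega)>\mathcal{F}_{\Omega}(U_t,\varphi)\}$ must have positive Lebesgue measure, which is the claim.

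The main technical hurdle I anticipate is the justification of $t^{p}K(t)\to 0$ at $t=0^{+}$: because $\varphi^{\ast}$ behaves like $1/\utilp^{p-1}$ where $\utilp$ is small, $K$ can a priori be singular at the origin, so a careful growth estimate on $H$ (or a truncation $\int_{\epsilon}^{1}$ followed by $\epsilon\to 0$) is needed to make the boundary contribution in the integration by parts vanish. Once that control is secured, the rest is bookkeeping with the Young inequality and the coarea formula.
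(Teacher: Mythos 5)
Your proposal is correct, and its engine is the same as the paper's: your identity for $\bigl[\ell_1(\beta,\Omega)-\mathcal F_{\Omega}(U_t,\varphi)\bigr]|U_t|$, the tangent-line inequality for $s\mapsto s^{p'}$, and the coarea identity $K'(t)=-H(t)/t$ are precisely the paper's \eqref{secondineq} and \eqref{derivata} after the sign change $K(t)=-I(t)$, $H(t)=-\int_{S_t}w\,F(\nu)\,d\haus$, and your ``key estimate'' is exactly \eqref{dis_der}. Where you genuinely diverge is the endgame: the paper concludes by contradiction, following \cite{bd10}, by showing that $t^pI(t)$ would otherwise be monotone while it must have positive derivative on a set of positive measure; you instead multiply \eqref{dis_der} by $t^{p-1}$, integrate over $(0,1)$, kill the boundary terms using $K(1)=0$ and $t^pK(t)\to0$, and extract strictness quantitatively from strict convexity on $U_t\cap\{\varphi\neq\varphi^{\ast}\}$ for small $t$ (valid, since $\utilp>0$ a.e.\ in $\Omega$ gives $|U_t\cap E|\to|E|>0$). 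This direct route buys self-containedness -- no appeal to the contradiction scheme of \cite{bd10} -- at the price of concentrating all delicacy in the single limit $t^pK(t)\to 0$, which you rightly flag as the technical hurdle. One caution there: Proposition \ref{eig_betavar} gives only interior $C^{1,\alpha}$ regularity, so the $L^{\infty}$ bound on $F(\D\utilp)$ you invoke is not available up to $\de\Omega$; moreover even granting it, the crude bound $\utilp>t$ on $U_t$ only yields $t^p\int_{U_t}F^p(\D\utilp)/\utilp^p\,dx=O(1)$, not $o(1)$. The limit nevertheless holds from $F(\D\utilp)\in L^p(\Omega)$ alone:
\[
t^p|K(t)|\le t^{p-1}\int_{\Omega}\varphi\,F(\D\utilp)\,dx+\int_{\Omega}F^p(\D\utilp)\min\set{1,\,t^p/\utilp^p}\,dx,
\]
where the first term vanishes as $t\to0^{+}$ because $p>1$ and H\"older gives $\varphi\,F(\D\utilp)\in L^1(\Omega)$, and the second vanishes by dominated convergence since $\utilp>0$ a.e. With this fix, your truncation $\int_{\eps}^{1}$ followed by $\eps\to0^{+}$ closes the argument, and the proof is complete.
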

\noindent	The proof is similar to that obtained in \cite{bd10} and \cite{pota}, and we only sketch it here. It can be divided in two main steps. First, we claim that, if
	\[
	w(x):=\varphi-\frac{[F(\D \utilp)]^{p-1}}{\utilp^{p-1}},\qquad I(t):=				\int_{U_{t}} w\frac{F(\D \utilp)}{\utilp}\,dx,
	\] 
	then the function $I: ]0,1[\rightarrow\R$ is locally absolutely continuous and
	\begin{equation}\label{dis_der}
	\mathcal{F}_{\Omega}(U_t , \fhi) \le \ell_1(\beta , \Omega) - \frac{1}{\abs{U_t}t^{p-1}}\left(\frac{d}{dt}\left(t^p I(t)\right)\right)
	\end{equation}
for almost every $t\in ]0,1[$. Second, we show that the derivative $\frac{d}{dt}\left(t^p I(t)\right)$ is positive in a subset of $]0,1[$ with nonzero measure.\\
In order to prove \eqref{dis_der}, using the representation formula \eqref{rep.ell1} we obtain that, for a.e. $t\in ]0,1[$,
 \begin{equation}
\label{secondineq}
\begin{split}
\mathcal F_{\Omega}(U_{t},\varphi)&= \ell_{1}(\beta ,\Omega)+\frac{1}{|U_{t}|}
\left(
	\int_{S_{t}} w F(\nu)\,d\haus-(p-1)\int_{U_{t}}
	\Big(\varphi^{p'}-\frac{[F(\D \utilp)]^{p}}{\utilp^{p}}\Big)dx
\right)\\[.2cm]
&\le 
\ell_{1}(\beta , \Omega)+\frac{1}{|U_{t}|}
\left(
	\int_{S_{t}} w F(\nu)\,d\haus-p\int_{U_{t}}
	 w\frac{F(\D \utilp)}{\utilp}dx
\right)\\[.2cm]
&=
\ell_{1}(\beta , \Omega)+\frac{1}{|U_{t}|}
\left(
	\int_{S_{t}} wF(\nu)\,d\haus-p\, I(t)
\right)
\end{split}
\end{equation}
where the inequality in \eqref{secondineq} follows from the inequality $\fhi^{p^\prime}\ge v^{p^\prime} + p^\prime v^{p^\prime-1}(\fhi-v)$, with $\fhi,\ v \ge 0 $. Proceeding as in \cite{pota} and using the coarea formula we, obtain for a.e. $t\in ]0,1[$
\begin{equation}\label{derivata}
-\frac{d}{dt}(t^p I(t))= t^{p-1}\left(
	\int_{S_{t}} wF(\nu)\,d\haus-p\, I(t)
\right).
\end{equation}
Substituting \eqref{derivata} in \eqref{secondineq} we obtain \eqref{dis_der}. We can conclude the proof, arguing by contradiction exactly as in \cite[Theorem 3.2]{bd10}, indeed is possible to see that the function $t^p I(t)$ has positive derivative in a set of positive measure. This fact with \eqref{dis_der} give us the inequality \eqref{tesithm}. 

\subsection{Applications}
In this section we use the representation formula given in Theorem \ref{ug_beta_var} in order to get some estimates for $\ell_1(\beta,\Omega).$

\subsubsection{A Faber-Krahn type inequality}
Let $\Omega \subset \R^{n}$ be  a bounded open set with $C^{1,\alpha}$ boundary,  $\alpha \in ]0,1[$ and let $\W_R$ be the  Wulff shape centered at the origin   with radius $R$ such that  $|\Omega|=|\mathcal W_{R}|$.

Let $\bar\beta$ be a positive constant and let us consider the following Robin eigenvalue problem in $\mathcal W_{R}$ for $\Qp$
\begin{equation}
  \label{rad}
  \left\{
    \begin{array}{ll}
      -\Qp v=\ell_1(\bar\beta,\mathcal W_{R}) 
      |v|^{p-2}v 
      &\text{in } 
      \mathcal W_{R},\\[.2cm] 
    F^{p-1}(\D v)F_\xi(\D v)\cdot\nu + \bar\beta F(\nu)\vert v\vert^{p-2}v=0
      &\text{on } \de\mathcal W_{R}.
    \end{array}
  \right.
\end{equation}
Let $w(t)$, $t \in [0,+\infty[$, be a non negative continuous function such that 
\begin{equation}
\label{ipbeta1}
w(t) \ge C(R)\, t,
\end{equation}
where $C(R)=\ell_1(\bar\beta,\mathcal W_{R}) +(p-1) \bar \beta^{\frac{p}{p-1}}$ is the constant appearing in \eqref{costc}.

Let us consider the following Robin eigenvalue problem 
 \begin{equation}
  \label{pb}
  \left\{
    \begin{array}{ll}
      -\Qp u=\ell_{1}(\beta,\Omega) |u|^{p-2}u
      &\text{in } 
      \Omega,\\[.3cm] 
      F^{p-1}(\D u)F_\xi(\D u)\cdot\nu + \beta(x) F(\nu)\vert u\vert^{p-2}u=0
      &\text{on } \de\Omega, 
    \end{array}
  \right.
\end{equation}
where 
\begin{equation}
\label{ipbeta}
\beta(x)= w(F^o(x)), \quad x \in \de \Omega.
\end{equation}
As a consequence of the representation formula \eqref{ug_beta_var} for $\ell_{1}(\beta, \Omega)$ we get  the following Faber-Krahn inequality.
\begin{thm}
Let $\Omega \subset \R^{n}$ be  a bounded open set with $C^{1,\alpha}$ boundary, let $\alpha \in ]0,1[$ and let $\W_R$ be the  Wulff shape  such that  $|\Omega|=|\mathcal W_{R}|$. Let $w(t)$, $t \in [0,+\infty[$, be a non negative continuous function  which verifies \eqref{ipbeta1} and let $\beta(x)$ be the function defined in \eqref{ipbeta}.
 Then, 
\begin{equation}
\label{fktesi}
 \ell_{1}(\bar\beta,\mathcal W_{R})\le \ell_{1}(\beta,\Omega).
\end{equation}  
\end{thm}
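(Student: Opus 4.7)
My plan is to apply the representation formula in Theorem \ref{ug_beta_var} (together with the strict inequality version stated immediately after it) to $\ell_{1}(\beta,\Omega)$ using the test function $\varphi(x)=f(F^{o}(x))$, where $f$ is the radial function defined in \eqref{betar} for the Wulff problem on $\mathcal{W}_{R}$. Since I need $\varphi$ defined on all of $\overline{\Omega}$, I will first extend $\varrho_{p}$ beyond $[0,R]$ by continuing the ODE \eqref{eqrad}, as already done in the proof of Theorem \ref{mono2}.

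The key algebraic step will be to convert the volume integral $\int_{U_{t}} f^{p'}(F^{o})\,dx$ appearing in $\mathcal{F}_{\Omega}(U_{t},\varphi)$ into a boundary integral via an anisotropic divergence identity. Using the polar-norm relations $F_{\xi}(\D F^{o}(x))=x/F^{o}(x)$ and $\D F^{o}\cdot F_{\xi}(\D F^{o})=F(\D F^{o})=1$, the direct computation $\dive(x/F^{o}(x))=(n-1)/F^{o}(x)$, and the ODE \eqref{eqf} satisfied by $f$, I obtain
\begin{equation*}
\dive\!\left(f(F^{o})\,\frac{x}{F^{o}(x)}\right)=f'(F^{o})+\frac{(n-1)f(F^{o})}{F^{o}}=\ell_{1}(\bar\beta,\mathcal{W}_{R})+(p-1)f^{p'}(F^{o}).
\end{equation*}
Integrating over $U_{t}$ and substituting into \eqref{F.betavar} yields the identity
\begin{equation*}
\mathcal{F}_{\Omega}(U_{t},f(F^{o}))=\ell_{1}(\bar\beta,\mathcal{W}_{R})+\frac{1}{\abs{U_{t}}}\!\left[\int_{S_{t}}\! f(F^{o})\!\left(F(\nu)-\frac{x\cdot\nu}{F^{o}}\right)\!d\haus+\int_{\Gamma_{t}}\!\left(\beta F(\nu)-f(F^{o})\frac{x\cdot\nu}{F^{o}}\right)\!d\haus\right].
\end{equation*}

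The next step will be to show that both bracketed integrands are pointwise nonnegative. On $S_{t}$, this follows at once from $f\ge 0$ and the anisotropic Cauchy-Schwarz inequality $x\cdot\nu\le F^{o}(x)F(\nu)$. On $\Gamma_{t}$, I would use the hypothesis $\beta=w(F^{o})\ge C(R)F^{o}$ from \eqref{ipbeta1} together with the bound $f(r)\le C(R)r$ from Theorem \ref{confr} to write
\begin{equation*}
\beta F(\nu)\ge C(R)F^{o}F(\nu)\ge C(R)(x\cdot\nu)\ge f(F^{o})\,\frac{x\cdot\nu}{F^{o}},
\end{equation*}
the middle inequality being again the anisotropic Cauchy-Schwarz, and the case $x\cdot\nu<0$ being trivial since then the right-hand side is non-positive while $\beta F(\nu)\ge 0$. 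Therefore $\mathcal{F}_{\Omega}(U_{t},f(F^{o}))\ge\ell_{1}(\bar\beta,\mathcal{W}_{R})$ for a.e. $t\in\,]0,1[$.

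To conclude, if $f(F^{o})\equiv[F(\D\utilp)]^{p-1}/\utilp^{p-1}$ (which forces $\Omega$ to be a Wulff shape centered at the origin, and equality already holds), Theorem \ref{ug_beta_var} gives $\ell_{1}(\beta,\Omega)=\mathcal{F}_{\Omega}(U_{t},f(F^{o}))\ge\ell_{1}(\bar\beta,\mathcal{W}_{R})$; otherwise, selecting $t$ in the positive-measure set provided by the strict inequality version of the representation formula gives $\ell_{1}(\beta,\Omega)>\mathcal{F}_{\Omega}(U_{t},f(F^{o}))\ge\ell_{1}(\bar\beta,\mathcal{W}_{R})$. The main technical obstacle I foresee is reconciling the extension of $\varrho_{p}$ beyond $[0,R]$ with the bound $f(r)\le C(R)r$: the proof of Theorem \ref{confr} relies on $f(r)\le\bar\beta$, which fails for $r>R$ by Lemma \ref{lemmabeta}, so to make the $\Gamma_{t}$ estimate valid when $\Omega$ extends outside $\mathcal{W}_{R}$ one must either choose the extension carefully or treat the portion $\{F^{o}>R\}\cap\de\Omega$ separately, exploiting that the hypothesis $w(t)\ge C(R)t$ is imposed globally.
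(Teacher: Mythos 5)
Your interior computation is correct, and it is a genuinely different mechanism from the paper's. The paper compares $\mathcal F_{\Omega}(U_{t},\varphi)$ with $\mathcal F_{\mathcal W_{R}}(\mathcal W_{r(t)},\varphi_{\star})$, where $\abs{U_{t}}=\abs{\mathcal W_{r(t)}}$, term by term: the volume integrals are matched exactly by an equimeasurability argument (the test function is taken constant, equal to $f(r(t))$, on each level set of $\utilp$, citing Maz'ya), and the boundary terms are handled by the weighted anisotropic isoperimetric inequality \eqref{wii}, whose admissibility hypotheses on the weight $f$ are exactly what Lemma \ref{lemmabeta} and Theorem \ref{tconv} were proved for. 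You instead take the genuinely radial test function $f(F^{o}(x))$ and replace symmetrization by the calibration field $f(F^{o})\,x/F^{o}$ together with the Riccati equation \eqref{eqf}, reducing everything to the pointwise anisotropic Cauchy--Schwarz inequality $x\cdot\nu\le F^{o}(x)F(\nu)$. Note that for your test function a term-by-term comparison would fail (the Wulff shape \emph{minimizes} $\int f^{p'}(F^{o})$ among equal-measure sets, so the volume term alone goes the wrong way); your divergence identity couples volume and boundary and is what closes the argument. Where it applies, it is more elementary: no \eqref{wii}, no Theorem \ref{tconv}, no rearrangement.

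However, the difficulty you flag in your last paragraph is a genuine gap, and neither of your proposed fixes works. First, the ODE extension of $\varrho_{p}$ stays positive only up to a finite radius $R_{+}$ (the boundary of $\W_{+}$ in the proof of Theorem \ref{mono2}); $f=(-\varrho_{p}'/\varrho_{p})^{p-1}$ blows up as $r\to R_{+}$, and a domain with $\abs{\Omega}=\abs{\W_{R}}$ need not be contained in $\W_{R_{+}}$, so $\varphi=f(F^{o})$ may be undefined on part of $\Omega$. Second, on $\Gamma_{t}\cap\{F^{o}>R\}$ your estimate needs $f(r)\le C(R)r$, which Theorem \ref{confr} gives only on $[0,R]$ and which must fail before $R_{+}$ since $f$ blows up; hence the global hypothesis \eqref{ipbeta1} cannot dominate $f(F^{o})\,x\cdot\nu/F^{o}$ there. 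Third, and decisively, ``choosing the extension carefully'' is impossible within your scheme: the divergence step only needs a radial profile $\phi\ge 0$ with
\begin{equation*}
\phi'(r)+\frac{(n-1)}{r}\,\phi(r)\ \ge\ \ell_{1}(\bar\beta,\W_{R})+(p-1)\phi^{p'}(r),
\end{equation*}
while the $\Gamma_{t}$ step forces $\phi(r)\le C(R)\,r$; these are incompatible on long intervals. Indeed, if $\phi$ stayed bounded then eventually $(n-1)\phi/r$ is negligible and the inequality forces $\phi'\ge \ell_{1}/2$, so $\phi$ grows; and since $\phi/r\le C(R)$, once $\phi$ is large one has $\phi'\ge (p-1)\phi^{p'}-(n-1)C(R)$, which (as $p'>1$) produces blow-up at a finite radius, contradicting the linear bound. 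Since $\Omega$ may be placed so that $\max_{\overline\Omega}F^{o}$ is arbitrarily large, no such calibration exists for general $\Omega$. This is exactly where the paper's route differs in substance: beyond radius $R$ it never uses the ODE, only the values of the weight inside the isoperimetric inequality \eqref{wii}, anchored at $r(t)\le R$, so extending the weight past $R$ while keeping monotonicity, the convexity of Theorem \ref{tconv} and the linear bound is a soft matter. To repair your proof you would have to import an ingredient of the type \eqref{wii} for the far region --- that is, essentially revert to the paper's argument outside $\W_{R}$.
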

\begin{proof}
We construct a suitable test function in $\Omega$ for \eqref{tesithm}.
Let $v_{p}$ be a positive eigenfunction of the  radial problem \eqref{rad} in $B_{R}$. By Theorem \ref{teorad}, $v_{p}$ is a function depending only by $F^{o}(x)$, $v_{p}=\rho_p(F^o(x))$, and then we can argue as in Section 4  defining  the function 
\[
	f(r_{x})=\varphi_{\star}(x) = \frac{\left[F\big(\D v_p(x)\big)\right]^{p-1}F_\xi\big(Dv_p(x)\big)\cdot\nu}{{v_{p}(x)}^{p-1}F(\nu)} = \frac{(-\rho_p^\prime(r_{x}))^{p-1}}{(\rho_p(r_{x}))^{p-1}},
	\]
where $r_{x}=F^{o}(x) \in [0,R]$

Denoted by $\mathcal W_{s}=\{x \in \mathcal W_{R} \colon v_{p}(x)>s\}, \, 0<s<R, $  clearly $\mathcal W_{s}$ is a Wulff shape centered at the origin and by Theorem \eqref{ug_beta_var} we get
\begin{equation}
\label{firad}
\ell_{1}(\beta(R), \mathcal W_{R})=F_{\mathcal W_{R}}(\mathcal W_{s},\varphi_{\star})=\frac{1}{| \mathcal W_{s}|}\left( -(p-1)\int_{\mathcal W_{s}} \varphi_{\star}^{p'}dx + 
	\int_{\de \mathcal W_{s}} \varphi_{\star} F(\nu) d \mathcal H^{n-1}
	\right)
\end{equation}

 Let  $\tilde u_{p}$ be the first eigenfunction of \eqref{pb} in 
$\Omega$ such that $\|\tilde u_{p}\|_{\infty}=1$. 
For $x\in\Omega$ we set $\tilde u_{p}(x)=t$, $0<t<1$.
Then we consider the Wulff shape $\mathcal W_{r(t)}$, centered at the origin, where $r(t)$ is the positive number such that $|U_{t}|=|\mathcal W_{r(t)}|$. Then,  we define the following test function
\[
	\varphi(x):=f(r(t))=f(F^o(x)).
\]
We stress that clearly $r(t)<R.$
 Our aim is to compare $\mathcal F_{\Omega}(U_{t},\varphi)$ with $\mathcal F_{\mathcal W_{R}}(\mathcal W_{r(t)},\varphi_{\star})$. Then by \eqref{firad} with $s=r(t)$ we have to show that 
\begin{equation*}
\begin{split}
	\label{compar}
	\mathcal F_{\Omega}(U_{t},\varphi) & \ge
 \frac{1}{|\mathcal W_{r(t)}|}
	\left( -(p-1)\int_{\mathcal W_{r(t)}} \varphi_{\star}^{p'}dx + 
	\int_{\de  \mathcal W_{r(t)}} \varphi_{\star}F(\nu)d \mathcal H^{n-1}
	\right)
	 \\ &=
	\mathcal F_{\mathcal W_{R}}(\mathcal W_{r(t)},\varphi_{\star}). 
\end{split}
\end{equation*}

We first observe that by \cite[Section 1.2.3]{maz}, being $|U_{t}|=|\mathcal W_{r(t)}|$ for all $t\in ]0,1[$
\begin{equation*}
\label{eqpr}
	\int_{U_{t}} \varphi^{p'}dx =\int_{\mathcal W_{r(t)}} \varphi_{\star}^{p'}dx.
\end{equation*}
Moreover,  from the  weighted isoperimetric inequality quoted in Remark \ref{rwii},  Theorem \ref{confr}  and the assumption \eqref{ipbeta} on $\beta$ we get

\begin{multline*}
	\int_{\de \mathcal W_{r(t)}} \varphi_{\star} F(\nu)d\sigma= \int_{\de \mathcal W_{r(t)}} f(r(t)) F(\nu)d \mathcal H^{n-1} \le \int_{\de U_{t}} f(F^o(x)) F(\nu)d \mathcal H^{n-1} \le \\ \le \int_{S_{t}} f(F^o(x)) F(\nu)d \mathcal H^{n-1}+\int_{\Gamma_{t}} f(F^o(x)) F(\nu)d \mathcal H^{n-1} \\= \int_{S_{t}} \varphi F(\nu)d \mathcal H^{n-1} + \int_{\Gamma_{t}} f(F^o(x)) F(\nu)d \mathcal H^{n-1}	\\ \le	\int_{S_{t}} \varphi F(\nu)d \mathcal H^{n-1} + C(R)\int_{\Gamma_{t}} F^o(x) F(\nu)d \mathcal H^{n-1}\\ \le \int_{S_{t}} \varphi F(\nu)d \mathcal H^{n-1} + \int_{\Gamma_{t}}w( F^o(x)) F(\nu)d \mathcal H^{n-1}
\end{multline*}
and this concludes the proof.
\end{proof}
\begin{rem}
When $\beta =\bar\beta$ is a nonnegative constant \eqref{fktesi} is proved in \cite{pota} in the anisotropic case and in \cite{bd10,df} in the Euclidean case. 
\end{rem}
\subsubsection{A Cheeger type inequality  for $\ell_1(\beta,\Omega)$}
In this part we introduce the anisotropic weighted Cheeger constant and, using the representation formula we prove an anisotropic weighted Cheeger inequality for $\ell_{1}(\beta, \Omega)$. Following \cite{cas} we give
\begin{defn}\label{awCHEE}
Let $g \colon \overline \Omega \to ]0,\infty[$ be a continuous function  the weighted anisotropic Cheeger constant is defined as follows
$$
h_{g}(\Omega)=\inf_{U\subset\Omega} \frac{\ds\int_{\de U} g F(\nu) d\haus}{\abs{U}} =\inf_{U\subset\Omega} \displaystyle \frac{P_{F}(g,U)}{\abs{U}}.
$$ 
\end{defn}
We observe that when $g(x)=c$ is a constant then 
\begin{equation*}
\label{ccc}
h_{g}(\Omega)= c \inf_{U\subset\Omega}\frac{P_{F}(U)}{\abs{U}}=c \, h(\Omega) ,
\end{equation*}
where $h(\Omega) $ is the anisotropic Cheeger constant defined in \eqref{cc} .
In \cite{cas} it is proved that actually $h_{g}(\Omega)$ is a minimum that is there exists a set $C\subset \Omega$ such that
\[
h_{g}(\Omega)=\displaystyle \frac{P_{F}(g,C)}{\abs{C}},
\]
and we refer to $C$ as a weighted Cheeger set.

We observe that for suitable weight $g$ the constant $h_g(\Omega)$ verifies an anisotropic isoperimetric inequality
\begin{thm}
Let $g(x)=w(F^o(x))=w(r)$, $r\ge 0$ with $w$ a non negative and nondecreasing function such that 
\begin{equation*}
w(r^{\frac 1 n}) r^{1-\frac{1}{n}} , \qquad  0 \le r \le R^n, 
\end{equation*}
is convex with respect to $r$. Then 
\[
h_g(\Omega) \ge h_g(\mathcal W_R)=\frac{nw(R)}{R},
\]
where $\mathcal W$ is a Wulff shape with the same measure as $\Omega$.
\end{thm}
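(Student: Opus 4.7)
The plan is to use the weighted anisotropic isoperimetric inequality \eqref{wii} as the sole tool, together with a nesting argument for concentric Wulff shapes. Fix any Lipschitz subset $U\subset\Omega$ and let $r_U:=(|U|/\kappa_n)^{1/n}$, so that the Wulff shape $\W_{r_U}$ centred at the origin has the same measure as $U$. Because $|U|\le|\Omega|=|\W_R|$, one has $r_U\le R$, whence $\W_{r_U}\subset\W_R$. The hypotheses on $w$---nonnegative, nondecreasing, and $z\mapsto w(z^{1/n})z^{1-1/n}$ convex on $[0,R^n]$---are precisely the admissibility conditions in \eqref{wii}. Applying that inequality with $f=w$ to $U$ (in place of $\Omega$) yields
\[
\int_{\partial U} w(F^o(x))\,F(\nu)\,d\haus \;\ge\; w(r_U)\,P_F(\W_{r_U}) \;=\; n\,\kappa_n\, w(r_U)\, r_U^{n-1}.
\]

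Dividing by $|U|=\kappa_n r_U^n$ bounds the weighted Cheeger quotient of $U$ below by that of $\W_{r_U}$:
\[
\frac{P_F(g,U)}{|U|} \;\ge\; \frac{n\, w(r_U)}{r_U} \;=\; \frac{P_F(g,\W_{r_U})}{|\W_{r_U}|}.
\]
Since $\W_{r_U}\subset\W_R$ is an admissible competitor in the infimum defining $h_g(\W_R)$, the right-hand side is at least $h_g(\W_R)$. Taking the infimum over $U\subset\Omega$ immediately produces the main inequality $h_g(\Omega)\ge h_g(\W_R)$.

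The explicit identification $h_g(\W_R)=\frac{n w(R)}{R}$ then comes in two pieces. Testing the infimum with the set $U=\W_R$ gives the obvious upper bound $h_g(\W_R)\le\frac{P_F(g,\W_R)}{|\W_R|}=\frac{n w(R)}{R}$, by direct computation using the homogeneity of $F$ and the fact that $F^o\equiv R$ on $\partial\W_R$. The matching lower bound comes from the previous nested-Wulff argument applied to arbitrary $U\subset\W_R$: because the equality case of \eqref{wii} is attained only by concentric Wulff shapes, the infimum defining $h_g(\W_R)$ is effectively reduced to the one-parameter family $\{\W_r\colon 0<r\le R\}$, on which the weighted quotient equals $\frac{n w(r)}{r}$, and the convexity/monotonicity assumption on $w$ is what forces $r=R$ to be the minimiser.

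I expect the last identification to be the main obstacle, since naively the weighted isoperimetric only provides the bound $P_F(g,U)/|U|\ge \frac{n w(r_U)}{r_U}$ with $r_U$ possibly much smaller than $R$; to pin the infimum to $r=R$ one has to extract from the convexity of $w(z^{1/n})z^{1-1/n}$ the right monotonicity of the scalar function $r\mapsto n w(r)/r$ on $(0,R]$, and then exclude small concentric Wulff shapes as would-be minimisers. This is precisely the place where the convexity hypothesis enters in an essential way, rather than merely serving as an admissibility condition for \eqref{wii}.
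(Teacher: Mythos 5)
Your first step is correct, and it is precisely the paper's own argument made explicit: the paper's entire proof of this theorem is the single sentence that it ``follows immediately'' from Remark~\ref{rwii}, i.e.\ from the weighted isoperimetric inequality \eqref{wii}, and your nesting argument --- $P_F(g,U)/|U|\ge n\,w(r_U)/r_U$ with $\W_{r_U}\subset\W_R$ an admissible competitor because $r_U\le R$ --- is the natural way to fill in that one line, and it does establish $h_g(\Omega)\ge h_g(\W_R)$.

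The genuine gap is in the identification $h_g(\W_R)=nw(R)/R$, and the obstacle you honestly flag at the end cannot be overcome under the stated hypotheses, because the monotonicity you hope to extract from convexity goes the wrong way. Since $G(z)=w(z^{1/n})z^{1-1/n}$ is convex on $[0,R^n]$ with $G(0)=0$, the difference quotient $G(z)/z=w(z^{1/n})/z^{1/n}$ is nondecreasing; substituting $r=z^{1/n}$, the function $r\mapsto w(r)/r$ is \emph{nondecreasing} on $]0,R]$. Hence on the one-parameter family of concentric Wulff shapes the quotient $P_F(g,\W_r)/|\W_r|=nw(r)/r$ is smallest as $r\to 0^+$, not at $r=R$; combining this with your own bound $P_F(g,U)/|U|\ge nw(r_U)/r_U$ gives $h_g(\W_R)=n\lim_{r\to0^+}w(r)/r$, which equals $nw(R)/R$ only when $w(r)/r$ is constant, i.e.\ $w(r)=c\,r$ on $[0,R]$. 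Concretely, $w(r)=r^2$ satisfies every hypothesis ($G(z)=z^{1+1/n}$ is convex, $w$ nonnegative and nondecreasing), yet $P_F(g,\W_r)/|\W_r|=nr\to 0$, so $h_g(\W_R)=0<nR=nw(R)/R$: the small concentric Wulff shapes you would need to exclude cannot be excluded. So what you have uncovered is a defect of the statement itself rather than of your strategy: the paper's one-line proof glosses over exactly this point, and the equality (hence also the quantitative bound $h_g(\Omega)\ge nw(R)/R$) requires essentially linear weights. Note also that the strict positivity $g>0$ demanded in Definition~\ref{awCHEE} already sits uneasily with the convexity hypothesis, since $w(0)>0$ forces $G(z)\sim w(0)\,z^{1-1/n}$ near $z=0$, which is concave.
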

\begin{proof}
The proof follows immediately from Remark \ref{rwii}.
\end{proof}
 When $\beta =\bar\beta$ is a nonnegative constant and $p=2$ in \cite{jk} the following Cheeger inequality is proved in the Robin eigenvalue case
\begin{equation}
\label{ck}
\ell_1( \bar\beta , \Omega )\ge
\begin{cases}
h(\Omega)\bar\beta-\bar\beta^{2} & \mbox{always}\\[.3cm]
\ds\frac{1}{4}\left[ h(\Omega) \right]^{2} & \mbox{if}\ \bar\beta \ge \ds\frac{1}{2} h(\Omega)
\end{cases}
\end{equation}
In the next result we extend \eqref{ck} to the anisotropic case for any $1<p<\infty$ considering  $\beta$ not in general constant. 

\begin{thm}\label{in_aw_CHEE}
Let us consider problem \eqref{bvp.var.beta} with $\beta\in C(\overline\Omega)$ such that $\beta\ge0$. Then the following weighted anisotropic Cheeger inequality holds 
\begin{equation}
\label{caw}
\ell_1(\beta, \Omega)\geq h_{\beta}(\Omega) - (p-1)\|\beta^{p^\prime}\|_{L^{\infty}(\overline\Omega)}, 
\end{equation}
where $p'=\frac{p}{p-1}$.
\end{thm}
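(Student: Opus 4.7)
The plan is to apply the representation-formula machinery of Section 6 with the choice $\varphi=\beta$ (which makes sense as a function on all of $\overline\Omega$ thanks to the hypothesis $\beta\in C(\overline\Omega)$), then use the definition of the weighted anisotropic Cheeger constant and a trivial pointwise $L^\infty$ bound on $\beta^{p'}$. This converts the variational lower bound for $\ell_1$ into exactly the right combination of $h_\beta(\Omega)$ and $\|\beta^{p'}\|_{L^\infty}$.

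First I would observe that $\beta$ is admissible as a test function in the representation results: it is nonnegative, and, being continuous on the compact set $\overline\Omega$, it is bounded and therefore lies in $L^{p'}(\Omega)$. Then I apply the second theorem of Section 6 (the inequality $\ell_1\ge \mathcal F_\Omega(U_t,\varphi)$ for $t$ in a set of positive measure) with $\varphi=\beta$; in the exceptional case $\beta\equiv [F(\D\utilp)]^{p-1}/\utilp^{p-1}$ I instead invoke Theorem \ref{ug_beta_var} directly, which gives the same inequality (indeed equality) for a.e.\ $t$. In either case I obtain the existence of some $t\in\,]0,1[$ with
$$\ell_1(\beta,\Omega)\ \ge\ \mathcal F_\Omega(U_t,\beta).$$

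Next I substitute $\varphi=\beta$ into the defining expression \eqref{F.betavar}. Because $\partial U_t=S_t\cup\Gamma_t$ (up to a null set), the two boundary terms combine into a single integral over $\partial U_t$, giving
$$\mathcal F_\Omega(U_t,\beta)=\frac{1}{|U_t|}\left[-(p-1)\int_{U_t}\beta^{p'}\,dx+\int_{\partial U_t}\beta\,F(\nu)\,d\haus\right].$$
Now both remaining estimates are immediate: the bulk term satisfies $\int_{U_t}\beta^{p'}\,dx\le\|\beta^{p'}\|_{L^\infty(\overline\Omega)}|U_t|$, while Definition \ref{awCHEE} applied to the admissible set $U_t\subset\Omega$ gives $\int_{\partial U_t}\beta\,F(\nu)\,d\haus\ge h_\beta(\Omega)\,|U_t|$. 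Substituting these two bounds and dividing by $|U_t|$ yields $\mathcal F_\Omega(U_t,\beta)\ge h_\beta(\Omega)-(p-1)\|\beta^{p'}\|_{L^\infty(\overline\Omega)}$, which chained with the previous display is the desired inequality \eqref{caw}.

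The only mildly delicate point is the bookkeeping around the two representation theorems so that one of them applies no matter how $\beta$ relates to the natural weight $\varphi_1=[F(\D\utilp)]^{p-1}/\utilp^{p-1}$ of the eigenfunction; once this case distinction is made the remaining content of the proof is two one-line estimates, so I do not expect a genuine obstacle.
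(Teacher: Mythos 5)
Your argument is essentially identical to the paper's proof: the paper likewise takes $\varphi=\beta$ as the test function in \eqref{tesithm}, merges the $S_t$ and $\Gamma_t$ integrals into a single integral over $\de U_t$, bounds the bulk term by $(p-1)\|\beta^{p'}\|_{L^\infty(\overline\Omega)}|U_t|$, and bounds the boundary term below by $h_\beta(\Omega)|U_t|$ via Definition \ref{awCHEE}. Your explicit case distinction for $\beta\equiv[F(\D \utilp)]^{p-1}/\utilp^{p-1}$ (falling back on Theorem \ref{ug_beta_var}) is a small point of rigor that the paper passes over silently, but it does not change the route.
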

\begin{proof}
Using $\beta$ as test function in \eqref{tesithm} we obtain
\begin{multline*}
\ell_1(\beta , \Omega) \ge \mathcal{F}(U_t , \beta) = \ds\frac{1}{\abs{U_t}}\left(-(p-1)\int_{U_t}\beta^{p^\prime} dx + \int_{S_t} \beta F(\nu) d\haus + \int_{\Gamma_t} \beta F(\nu) d\haus\right)\\
= \ds\frac{1}{\abs{U_t}}\left(-(p-1)\int_{U_t}\beta^{p^\prime} dx + \int_{\de U_t} \beta F(\nu) d\haus\right) \ge -(p-1)\|\beta^{p^\prime}\|_\infty + h_{\beta}(\Omega).
\end{multline*}
\end{proof}
\begin{rem}
We observe that the previous result continues to hold if we take $\beta \in C(\de \Omega)$. Indeed in this case from a classical result, see for instance \cite[Theorem 4.I]{g57}, we know that the function $\beta $ is the trace of a nonnegative function $\beta_\Omega \in C\left(\overline{\Omega}\right)$. Then inequality  \eqref{caw} holds with $\beta = \beta_\Omega.$
\end{rem}
We emphasize the inequality \eqref{caw} in the particular case of $\beta =\bar\beta$ is a nonnegative constant. 
\begin{cor}\label{corche}
The first eigenvalue $ \ell_1( \bar\beta , \Omega ) $ of \eqref{bvp.var.beta} on a fixed bounded open set $\Omega \subset \R^n$ with Lipschitz boundary satisfies
\begin{equation}\label{cheeger}
\ell_1( \bar\beta , \Omega )\ge
\begin{cases}
h(\Omega)\bar\beta-(p-1)\bar\beta^{\frac{p}{p-1}} & \mbox{always}\\[.3cm]
\ds\frac{1}{p^{p}}\left[ h(\Omega) \right]^{p} & \mbox{if}\ \bar\beta \ge \ds\frac{1}{p^{p-1}} \left[h(\Omega)\right]^{p-1}
\end{cases}
\end{equation}
\end{cor}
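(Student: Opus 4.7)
My plan is to derive both bounds by inserting constant test functions into the representation-formula inequality \eqref{tesithm}.

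For the first (\emph{always}) case, I would simply specialize Theorem~\ref{in_aw_CHEE} to $\beta\equiv\bar\beta$. Since $\bar\beta$ is constant, it factors out of the weighted anisotropic Cheeger constant, so $h_{\bar\beta}(\Omega)=\bar\beta\,h(\Omega)$, where $h(\Omega)$ is the Cheeger constant defined in \eqref{cc}, while $\|\bar\beta^{p'}\|_{L^\infty(\overline\Omega)}=\bar\beta^{p/(p-1)}$. Inequality \eqref{caw} then reads exactly $\ell_1(\bar\beta,\Omega)\ge \bar\beta\,h(\Omega)-(p-1)\bar\beta^{p/(p-1)}$, which is the first inequality.

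For the second bound I would instead choose a constant test function $\varphi\equiv c$ with $0<c\le\bar\beta$ in \eqref{tesithm}. This produces some $t\in(0,1)$ for which
\[
\ell_1(\bar\beta,\Omega)\ge \mathcal F_{\Omega}(U_t,c)=\frac{1}{|U_t|}\Big(-(p-1)c^{p'}|U_t|+c\int_{S_t}F(\nu)\,d\haus+\bar\beta\int_{\Gamma_t}F(\nu)\,d\haus\Big).
\]
The key step is then to dominate the two boundary pieces by a single anisotropic perimeter, using $c\le\bar\beta$:
\[
c\int_{S_t}F(\nu)\,d\haus+\bar\beta\int_{\Gamma_t}F(\nu)\,d\haus \;\ge\; c\int_{\de U_t}F(\nu)\,d\haus=c\,P_F(U_t)\;\ge\; c\,h(\Omega)\,|U_t|,
\]
by the definition \eqref{cc} of $h(\Omega)$. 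This gives the $t$-independent lower bound $\ell_1(\bar\beta,\Omega)\ge -(p-1)c^{p'}+c\,h(\Omega)$, valid for every admissible $c\in (0,\bar\beta]$.

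Finally I would optimize over $c$: the concave map $c\mapsto c\,h(\Omega)-(p-1)c^{p'}$ attains its unconstrained maximum at $c^*=(h(\Omega)/p)^{p-1}$ with value $[h(\Omega)]^p/p^p$. The hypothesis $\bar\beta\ge \tfrac{1}{p^{p-1}}[h(\Omega)]^{p-1}$ is precisely the condition $c^*\le\bar\beta$, so $c=c^*$ is admissible and delivers the second inequality. There is no substantive obstacle---the argument rests on \eqref{tesithm} and the elementary merging of the $S_t$ and $\Gamma_t$ integrals---but the threshold on $\bar\beta$ is meaningful rather than cosmetic, since it is exactly what allows the optimal $c^*$ to lie below $\bar\beta$ and thus the merging step to go through at the optimal level.
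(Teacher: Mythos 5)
Your proposal is correct and matches the paper's own proof: the first bound is obtained by specializing Theorem~\ref{in_aw_CHEE} to constant $\bar\beta$ (using $h_{\bar\beta}(\Omega)=\bar\beta\,h(\Omega)$), and the second by inserting the constant $\frac{1}{p^{p-1}}[h(\Omega)]^{p-1}$ as test function in $\mathcal F_{\Omega}(U_t,\cdot)$ under the stated threshold on $\bar\beta$, which is exactly your optimal $c^*$. Your write-up merely makes explicit what the paper leaves implicit, namely the merging of the $S_t$ and $\Gamma_t$ integrals via $c\le\bar\beta$ and the optimization over $c$ that identifies the constant and the threshold.
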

\begin{proof}
From the Theorem \ref{in_aw_CHEE} we obtain, using the constant function $\bar\beta$ as test, we obtain the first part of the inequality.
For the second part, is suitable using as test function in the functional $\mathcal{F}_\Omega(U_t, \cdot)$, the constant $\frac{1}{p^{p-1}} \left[h_F(\Omega)\right]^{p-1}$ under the assumption that the constant $\bar\beta \ge \frac{1}{p^{p-1}} \left[h_F(\Omega)\right]^{p-1}$.
\end{proof}

\begin{rem}
From the anisotropic Cheeger inequality for constant $\bar\beta$ we obtain immediately a lower bound for $\ell_1(\bar\beta,\Omega)$ in terms of the anisotropic inradius of $\Omega$  different from \eqref{st_pota} by using \eqref{hr}.
\end{rem}

\begin{rem}
By $(ii)$ Theorem \ref{th:lbdpr2} and Corollary \ref{corche} we obtain for $\bar\beta \ge \frac{1}{p^{p-1}}\left[h(\Omega)\right]^{p-1}$ the anisotropic Cheeger inequality \eqref{cheeger} for the first Dirichlet eigenvalue of $\Qp$
\begin{equation*}
\lambda_D(\Omega) \ge \ell_1(\bar\beta , \Omega) \ge \frac{1}{p^p}\left[h(\Omega)\right]^p.
\end{equation*}
\end{rem}

\end{document}